\documentclass[11pt]{amsart}
\usepackage{amsmath}
\usepackage{amssymb}
\usepackage{mathtools}
\usepackage[usenames,dvipsnames]{xcolor}
\usepackage[margin=1in]{geometry}
\usepackage{hyperref}

\def\Z{{\mathbb Z}}
\def\N{{\mathbb N}}
\def\Q{{\mathbb Q}}

\DeclareMathOperator{\Kn}{Kn}

\def\<{\langle}
\def\>{\rangle}
\newtheorem{thm}{Theorem}[section]
\newtheorem{lem}[thm]{Lemma}
\newtheorem{cor}[thm]{Corollary}
\newtheorem{prop}[thm]{Proposition}

\newtheorem{quest}[thm]{Question}

\theoremstyle{definition}
\newtheorem{defn}[thm]{Definition}
\theoremstyle{remark}
\newtheorem{rem}[thm]{Remark}
\theoremstyle{remark}
\newtheorem{ex}[thm]{Example}
\theoremstyle{remark}

\begin{document}

\title{Strongly regular graphs, strongly polynomial sequences, and two-level polynomials}
\author{Tristram Bogart and Santiago Jim\'enez-Salazar}

\begin{abstract}
We study the behavior of the characteristic and chromatic polynomials of certain families of graphs and their relation to homomorphism counts, focusing on polynomial dependence of the coefficients at each fixed codegree on the graph family parameter. We prove that the
characteristic polynomials of strongly regular graphs with polynomial parameters form a two-level polynomial of infinite depth in the sense of Bogart and Woods. In contrast, the chromatic polynomials of Paley
graphs have maximal constant depth two: their coefficient of codegree three is not
eventually polynomial in the number of vertices. We also prove that both the
chromatic and characteristic polynomials of every strongly polynomial
graph sequence, in the sense of de La Harpe and Jaeger, have infinite depth. Finally, we investigate homomorphism counts between two strongly polynomial graph sequences.
We show that infinite depth need not hold in general, but we construct target
sequences for which it holds for every strongly polynomial source sequence.
 
\end{abstract}

\maketitle

\section{Introduction} 

For a graph polynomial $f_G(n)$ and a family of graphs $\{G_q\}$ indexed by a natural number parameter $q$, we consider the sequence of polynomials $f_{G_q}(n)$ as a function of two variables $q$ and $n$.  
Although the degree of $f_{G_q}$ may grow with $q$, its
coefficients often exhibit polynomial dependence on $q$ when indexed by
their codegree (that is, their distance from the leading term.) The notion of a \emph{two-level polynomial}, inspired by Chaiken, Hanusa, and Zaslavsky's study of nonattacking chess queens \cite{CHZ} and formalized and generalized by Bogart and Woods~\cite{BW}, provides a framework for studying this behavior. Among other combinatorial applications, the latter paper shows that for certain families of graphs, both the characteristic and chromatic polynomials exhibit two-level behavior. 

In this paper, we systematically study two-level behavior for chromatic and characteristic
polynomials of more general families of graphs. We first consider \emph{strongly regular graphs}, which are regular graphs satisfying the (highly restrictive) additional conditions that every pair of adjacent vertices have $\lambda$ common neighbors and every pair of non-adjacent vertices have $\mu$ common neighbors for some $\lambda, \mu$. We show that for polynomially parametrized families of strongly regular graphs $G_q$, their characteristic polynomials always exhibit strong two-level behavior. In contrast, we use the strongly regular family of \emph{Paley graphs} to show that the chromatic polynomials may strikingly fail to do so. 

We also analyze the close relationship between two-level behavior and \emph{strongly polynomial sequences} of graphs $\{G_q\}$. A sequence $\{G_q\}$ is strongly polynomial if, for every fixed graph $F$, the number $\hom(F,G_q)$ of homomorphisms from $F$ to $G_q$ is given by a polynomial in $q$. The notion originates in the work of de la Harpe and Jaeger~\cite{Jaeger} on generalizations of the chromatic polynomial by homomorphism counting. Goodall, Ne\v{s}et\v{r}il, and Ossona de Mendez~\cite{GNO} extend this viewpoint to relational structures and use quantifier-free interpretations to construct a broad class of strongly polynomial graph sequences. 

We begin by reviewing the key definitions and properties of two-level polynomials from \cite{BW}, along with some examples and a new construction. 

\begin{defn} A \emph{numerical polynomial} is a polynomial $g(x) \in \Q[x]$ such that $g(q) \in \Z$ for every $q \in \Z$.
\end{defn}

\begin{defn} \cite{BW}
\label{def:two-level}
Let $g$ be an eventually nonnegative numerical polynomial, let $S=\{q\in\N:\ g(q)\ge 0\}$, and let $e:S \to \N \cup \{-1,\infty\}$.
\begin{enumerate}
\item[(a)] A \emph{two-level polynomial} of degree function $g=g(q)$ and depth function $e=e(q)$ is an infinite sequence of polynomials $\{f_q\}_{q\in S}$, with $f_q(n)$  of degree $g(q)$, that has the following property: writing \[f_q(n) = \sum_{r=0}^{\infty}c_r(q)n^{g(q)-r},\] where $c_r(q)=0$ if $r>g(q)$, there exist polynomials $\{\varphi_r(q)\}_{r\ge 0}$ such that, for every $q\in S$ and every $r \leq e(q)$, we have $c_r(q) = \varphi_r(q)$. 
 \item[(b)] For $q\in \N\setminus S$, define $f_q(n)=0$ and $e(q)=-1$, extending the definition of $f_q(n)$ to all $q\in\N$.
 \end{enumerate}
\end{defn}

\begin{ex}
The sequence $f_q(n)=(n+1)^q$ is a two-level polynomial of infinite depth, because \[f_q(n)=\sum\limits_{r=0}^{\infty}\binom{q}{r}n^{q-r}\] and the coefficient $\binom{q}{r}$ agrees with the polynomial $\frac{q(q-1)\cdots (q-r+1)}{r!}$ even when $r>q$.
\end{ex}

\begin{ex} \label{ex:finitedepth}
For \(q\geq 1\), the family 
$f_q(n)=n^q+n$ has only finite depth $e(q)=q-2$.
Its degree is \(g(q)=q\), and its coefficients, indexed by
codegree, are
\[
c_r(q)=
\begin{cases}
1, & r=0 \text{ or } r=q-1,\\
0, & \text{otherwise}.
\end{cases}
\]
\end{ex}

\begin{rem}
By definition, \emph{any} sequence of polynomials of $q$ whose degree function is polynomial in $q$ is a two-level polynomial of depth -1, and many sequences have constant positive depth for "trivial" reasons. For this reason, we say that a sequence exhibits two-level behavior if the depth function is unbounded. The strongest two-level behavior occurs when the depth is infinite.
\end{rem}

To give an idea of the range of this concept, the following proposition provides an easy method to construct many two-level polynomials with infinite depth. In particular, it shows that the falling factorial $(n)_q$, defined by $(n)_q=n(n-1)\cdots (n-q+1)$, is a two-level polynomial with infinite depth.

\begin{prop}
\label{prop:roots polynomial}
Let $P(q)$ be a polynomial. Then

$$f_q(n)=\prod\limits_{i=0}^{q-1} (n+P(i))$$

is a two-level polynomial of degree $q$ and infinite depth.
\end{prop}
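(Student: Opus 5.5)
Expanding the product, the coefficient of codegree $r$ is the elementary symmetric polynomial
\[
c_r(q)=e_r\bigl(P(0),P(1),\dots,P(q-1)\bigr)=\sum_{0\le i_1<\dots<i_r\le q-1}P(i_1)\cdots P(i_r),
\]
and $c_r(q)=0$ for $r>q$. The degree function is $g(q)=q$, which is certainly a numerical, eventually nonnegative polynomial. So the whole task is to produce, for each fixed $r$, a polynomial $\varphi_r(q)$ such that $c_r(q)=\varphi_r(q)$ for \emph{every} $q\ge 0$. This includes $q<r$, where we need $\varphi_r(q)=0$.

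The plan is to use Newton's identities. Write the power sums as $p_k(q)=\sum_{i=0}^{q-1}P(i)^k$. Since $P(i)^k$ is a polynomial in $i$, Faulhaber's formula gives a polynomial $\pi_k(q)$ agreeing with $p_k(q)$ for all $q\ge 0$, including $p_k(0)=0$. Newton's identities then express $e_r$ as a fixed polynomial with rational coefficients in $p_1,\dots,p_r$:
\[
r\,e_r=\sum_{k=1}^r(-1)^{k-1}e_{r-k}p_k .
\]
This holds for any finite list of numbers, and in particular for the list $P(0),\dots,P(q-1)$ of length $q$. Substituting $\pi_k$ for $p_k$ therefore yields a polynomial $\varphi_r(q)$ with $c_r(q)=\varphi_r(q)$ for each $q\ge0$.

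The main point to check is the regime $r>q$. Here the Newton identities must still be valid, with $e_r=0$. This is true: the identities hold in the ring of symmetric functions and specialize correctly to any number of variables, so that $e_r(x_1,\dots,x_q)=0$ when $r>q$. Hence $\varphi_r(q)=0=c_r(q)$ there, and the depth is infinite.

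An alternative route avoids Newton's identities, and I would mention it as a cross-check. Induct on $r$ using the recursion
\[
c_r(q+1)=c_r(q)+P(q)\,c_{r-1}(q),\qquad c_r(0)=[r=0].
\]
If $c_{r-1}(q)$ is a polynomial in $q$, then $c_r(q)=\sum_{j=0}^{q-1}P(j)c_{r-1}(j)$ for $r\ge1$, and this is a polynomial in $q$ by Faulhaber's formula. The recursion holds for all $q\ge0$ with no exceptions, so agreement holds for all $q$, including $q<r$.
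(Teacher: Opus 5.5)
Your proof is correct, and your main route differs from the paper's.

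The paper proves an auxiliary lemma: for any $p\in\Q[x_1,\dots,x_r]$, the sum $S_p(q)=\sum_{0\le i_1<\cdots<i_r<q}p(i_1,\dots,i_r)$ is a polynomial in $q$. It argues by induction on $r$ using the finite difference $S_p(q+1)-S_p(q)$, and then applies the lemma to $p=P(x_1)\cdots P(x_r)$. Your ``cross-check'' is exactly this argument specialized to products. There the difference factors as $P(q)\,c_{r-1}(q)$, so you never need the parametric form of the induction hypothesis (summands depending on $q$ through the last variable) that the general lemma tacitly uses.

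Your primary argument instead reduces everything to one-variable sums $\sum_{i<q}P(i)^k$, handled by Faulhaber, plus the universal Newton identities in the ring of symmetric functions. The only care needed is that these identities stay valid, with $e_r=0$, when $r>q$, and that you may divide by $r$ over $\Q$; you address both.

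What your route buys is uniformity with the paper's own proof of Theorem~\ref{thm:stronglyregular}. There too, each fixed power sum of the roots (there $\operatorname{tr}(A_q^j)$) is a polynomial in $q$, and Newton's identities carry this to every fixed-codegree coefficient. So your argument shows the proposition is an instance of that same principle. The paper's lemma is more elementary, and more general in another direction: it handles arbitrary polynomial summands over increasing index tuples, not only elementary symmetric functions of $P(0),\dots,P(q-1)$.
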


\begin{lem}
Let $p\in \Q[x_1,\ldots,x_r]$. Then
\[
S_p(q):=
\sum_{0\leq i_1<\cdots<i_r<q}
p(i_1,\ldots,i_r)
\]
is a polynomial in $q$.
\end{lem}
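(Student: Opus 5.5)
By linearity of $p\mapsto S_p(q)$, it suffices to treat monomials $p=x_1^{a_1}\cdots x_r^{a_r}$. I will then argue by induction on $r$. The base case $r=0$ gives the empty sum convention $S_p(q)=p$ (a constant). For $r=1$, the statement is the classical fact that $\sum_{0\le i<q} i^{a}$ is a polynomial in $q$ of degree $a+1$, by Faulhaber's formula.

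For the inductive step, I peel off the innermost variable $i_1$. I write
\[
S_p(q)=\sum_{0\le i_2<\cdots<i_r<q} x_2^{a_2}\cdots x_r^{a_r}\Big|_{x_j=i_j}\cdot\sum_{0\le i_1<i_2} i_1^{a_1}.
\]
By the $r=1$ case, the inner sum equals $F_{a_1}(i_2)$ for a fixed polynomial $F_{a_1}\in\Q[x]$, and this holds for all integers $i_2\ge 0$, including $i_2=0$, where both sides are $0$. Hence $S_p(q)=S_{\tilde p}(q)$, where $\tilde p(x_2,\dots,x_r)=F_{a_1}(x_2)\,x_2^{a_2}\cdots x_r^{a_r}\in\Q[x_2,\dots,x_r]$ has $r-1$ variables. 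The induction hypothesis then gives that $S_{\tilde p}$ is a polynomial in $q$.

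The only point needing care is that the identity $\sum_{0\le i<m} i^a=F_a(m)$ holds for every $m\ge0$, so that the substitution is valid over the whole summation range. This is where I expect the main (if modest) obstacle to lie. Faulhaber's formula holds for all $m\ge 0$, and this can be justified independently: $F_a$ is defined as the unique polynomial with $F_a(0)=0$ and $F_a(m+1)-F_a(m)=m^a$, which exists because the binomial polynomials $\binom{x}{k}$ span $\Q[x]$ and satisfy $\Delta\binom{x}{k+1}=\binom{x}{k}$. The same argument shows the resulting polynomial in $q$ has degree at most $\deg p + r$, though that bound is not needed for the statement.
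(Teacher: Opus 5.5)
Your proof is correct, but it runs the induction from the opposite end compared with the paper. The paper takes a finite difference in $q$. The quantity $S_p(q+1)-S_p(q)$ is the sum with the largest index frozen at $i_r=q$; by induction this is a polynomial in $q$, and the paper concludes that $S_p$ itself is a polynomial. You instead sum out the smallest index $i_1$ explicitly via Faulhaber's formula. This replaces $p$ by a new polynomial $\tilde p$ in $r-1$ variables that does not depend on $q$.

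The paper's route is shorter, but it quietly uses two facts that you make explicit. First, its summand $p(i_1,\dots,i_{r-1},q)$ depends on $q$. To invoke the induction hypothesis as stated, one must expand $p$ in powers of its last variable and write the difference as $\sum_k q^k S_{p_k}(q)$. Second, the step from a polynomial first difference to a polynomial function is exactly the antidifference fact, i.e.\ the $r=1$ case, which you justify with the binomial basis $\binom{x}{k}$.

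Your version has three advantages. It applies the induction hypothesis verbatim, since $\tilde p$ is independent of $q$. It handles the boundary terms $i_2=0$ correctly through $F_{a_1}(0)=0$. It also yields the degree bound $\deg p+r$ at no extra cost.
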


\begin{proof}
We proceed by induction on $r$. The case $r=0$ is trivial. For $r\geq 1$, observe that
\[
S_p(q+1)-S_p(q)
=
\sum_{0\leq i_1<\cdots<i_{r-1}<q}
p(i_1,\ldots,i_{r-1},q).
\]
By the induction hypothesis, the finite difference $
S_p(q+1)-S_p(q)
$
is a polynomial in $q$. Therefore, $S_p(q)$ is a polynomial in $q$.
\end{proof}

\begin{proof}[Proof of Theorem \ref{prop:roots polynomial}]
Writing
\[
F_q(n)=\sum_{r=0}^{q}c_r(q)n^{q-r},
\]
we have
\[
c_r(q)
=
\sum_{0\leq i_1<\cdots<i_r<q}
P(i_1)\cdots P(i_r)
=\sum_{0\leq i_1<\cdots<i_r<q}p(i_1,...i_r)\]

where $
p(x_1,\ldots,x_r)
=
P(x_1)\cdots P(x_r)$. Hence, by the lemma, $c_r(q)$ is a polynomial for every $q$, so $f_q(n)$ is a two-level polynomial of degree function $q$ and infinite depth.
\end{proof}

We summarize the key closure properties of the class of two-level polynomials that we will need. These properties were originally stated and proved for the more general class of two-level \emph{quasi}-polynomials, but none of our results will require this generality.

\begin{prop} \cite[\S 2]{BW} \label{prop:closure}
  \begin{enumerate}
  \item Suppose $f(q;n)$ is a bivariate polynomial in $q$ and $n$ whose degree in $n$ is $d$. Then $f_q(n) = f(q;n)$ is a two-level polynomial of constant degree function $g(q) = d$ and infinite depth.
  \item  Let $f_q(n)$ and $f'_q(n)$ be two-level polynomials
    of respective degree functions $g(q)$ and $g'(q)$ and depth functions $e(q)$ and $e'(q)$. Then the product $f_q(n)\cdot f'_q(n)$ is a two-level polynomial of degree function $g(q) + g'(q)$ and depth function $\min(e(q),e'(q))$.
  \item Let $h(q)$ be a numerical polynomial and $f_q(n)$ be a two-level polynomial of degree function $g(q)$ and depth function $e(q)$. Suppose the leading coefficient of $f_q(n)$ is a constant $c$. Then  $\left\{f_q(n)^{h(q)}/c^{h(q)}\right\}_{g(q),h(q)\ge 0}$ is a monic two-level quasi-polynomial of degree function $g(q)h(q)$ and depth function $e(q)$. 
  \end{enumerate}
\end{prop}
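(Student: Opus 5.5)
The statement is Proposition~\ref{prop:closure}, a citation of closure properties from \cite[\S 2]{BW}. I will prove each part directly from Definition~\ref{def:two-level}, working coefficientwise by codegree. The one tool I need is a polynomiality principle for sums whose number of terms grows with $q$. If a sum over a range of length growing with $q$ has summands that are polynomial in $q$, in the sense of the lemma above, then the sum is polynomial in $q$; the finite-difference argument used there applies here as well.

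\emph{Part (1).} Write $f(q;n)=\sum_{k=0}^{d} a_k(q)n^k$ with $a_k\in\Q[q]$. Then the coefficient of codegree $r$ is $c_r(q)=a_{d-r}(q)$ for $r\le d$, and $c_r(q)=0$ for $r>d$. Take $\varphi_r=a_{d-r}$ for $r\le d$ and $\varphi_r=0$ otherwise. These agree with $c_r(q)$ for every $q$ and every $r$, so the depth is infinite. One caveat needs checking: the definition asks for degree exactly $d$. If $a_d(q)$ vanishes at finitely many $q$, I would either interpret the degree function loosely or note that the matching of coefficients holds regardless.

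\emph{Part (2).} The codegree-$r$ coefficient of $f_qf'_q$ relative to degree $g+g'$ is
\[
C_r(q)=\sum_{s=0}^{r} c_s(q)\,c'_{r-s}(q).
\]
This is a convolution of fixed length $r+1$, since codegree is measured from the top. If $r\le\min(e(q),e'(q))$, then every index satisfies $s\le e(q)$ and $r-s\le e'(q)$. So $C_r(q)=\sum_{s}\varphi_s(q)\varphi'_{r-s}(q)=:\Phi_r(q)$, which is a polynomial. The degree claim requires the leading coefficients to multiply to something nonzero, which holds over $\Q$.

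\emph{Part (3).} This part is the main obstacle, because the exponent $h(q)$ varies with $q$. Normalize to $u_q(n)=f_q(n)/c=n^{g}\bigl(1+\sum_{s\ge1}b_s(q)n^{-s}\bigr)$, where $b_s=c_s/c$. Then
\[
u_q^{h}=n^{gh}\sum_{m\ge0}\binom{h}{m}\Bigl(\sum_{s\ge1}b_sn^{-s}\Bigr)^m .
\]
Only $m\le r$ contribute to the $n^{-r}$ term. Hence the codegree-$r$ coefficient is
\[
\sum_{m=0}^{r}\binom{h(q)}{m}\sum_{\substack{s_1+\cdots+s_m=r\\ s_i\ge1}} b_{s_1}(q)\cdots b_{s_m}(q),
\]
a finite sum whose shape is independent of $q$. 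For $r\le e(q)$ each $b_{s_i}$ with $s_i\le r$ equals $\varphi_{s_i}/c$. Moreover, $\binom{h(q)}{m}$ agrees with the polynomial $h(q)(h(q)-1)\cdots(h(q)-m+1)/m!$, and this remains true when $m>h(q)\ge0$, where both sides vanish. This is the same observation as in the $(n+1)^q$ example. So the coefficient is a polynomial in $q$ whenever $r\le e(q)$.

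Some bookkeeping remains in part (3). The restriction to $q$ with $g(q),h(q)\ge0$ is needed so that the powers are genuine polynomials. Dividing by $c^{h}$ makes the result monic. The result is stated as a quasi-polynomial only for generality: the argument above yields polynomials with rational coefficients, which suffices.
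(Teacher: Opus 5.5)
The paper gives no proof of this proposition; it quotes it from \cite[\S 2]{BW}, so there is no in-paper argument to compare your route against. Your direct coefficientwise proof is correct.
\begin{itemize}
\item In (2), the codegree-$r$ coefficient of the product is a convolution of length $r+1$. It only involves codegrees at most $r\le\min(e(q),e'(q))$.
\item In (3), the expansion of $(1+X)^{h(q)}$ truncates at $m\le r$. The factor $\binom{h(q)}{m}$ agrees with the polynomial $h(q)(h(q)-1)\cdots(h(q)-m+1)/m!$ for every integer $h(q)\ge 0$. Hence the coefficient is a fixed polynomial expression in $q$ whenever $r\le e(q)$.
\end{itemize}
This also confirms your remark that, with $h$ a polynomial, genuine polynomials suffice and the quasi-polynomial wording is only inherited generality.

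Two small remarks.
\begin{itemize}
\item You announce a ``polynomiality principle for sums with a growing number of terms'' at the outset, but you never use it. Every sum in your argument has a number of terms bounded in terms of $r$ alone, which is precisely why the proof is short. You can drop that paragraph.
\item Your caveat in (1) is warranted: if $a_d$ has integer roots, then $\deg f_q<d$ there, so the degree must be read formally. Part (2) has an analogous edge case. For the finitely many $q$ with $g(q)+g'(q)\ge 0$ but $g(q)<0$ or $g'(q)<0$, the product is $0$ by convention (b) of the definition. So the degree claim should be read on $\{g\ge 0,\ g'\ge 0\}$. This is a looseness in the quoted statement rather than in your argument: the depth there is $-1$, so no coefficient identity is at stake.
\end{itemize}
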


To analyze Paley graphs, which are indexed by primes congruent to $1$ modulo
$4$, it will be necessary to consider families of polynomials which are not defined for all natural numbers. For this purpose, we introduce the following definition. 

\begin{defn}
Let \(Q\subseteq\mathbb{N}\) be an infinite set. A family
\((f_q(n))_{q\in Q}\) is a \emph{two-level polynomial over \(Q\)}
if it satisfies Definition 1.2 with the index \(q\)
restricted to \(Q\). In particular, the coefficient identities
\(c_r(q)=\varphi_r(q)\) are required only for \(q\in Q\) and
\(0\leq r\leq e(q)\). The degree function \(g\in\mathbb{Q}[q]\)
is required to take nonnegative integer values on \(Q\).
\end{defn}

Bogart and Woods developed a schema for establishing two-level
behavior in the more general setting of combinatorial counting functions
and two-level quasi-polynomials. Roughly, the schema organizes the
contributions to a fixed codegree into finitely many combinatorial types
of bounded size, whose occurrences can be counted polynomially in the
family parameter. Their applications include graph polynomials and
Ehrhart-theoretic counting problems, such as placements of nonattacking
chess pieces and Sidon sets. In particular, they obtain infinite depth for
the chromatic and characteristic polynomials of Kneser graphs, Johnson
graphs, and fixed Cartesian powers of complete graphs.

Section~\ref{sec:contrast} shows that chromatic and characteristic
polynomials can nevertheless exhibit different two-level behavior. For any
family of strongly regular graphs whose parameters are polynomial in $q$,
we prove that the characteristic polynomials have infinite depth over the
set of admissible indices (Theorem~\ref{thm:stronglyregular}). The proof
uses traces of powers of the adjacency matrix and Newton's identities.
In contrast, the chromatic polynomials of the family of Paley graphs have maximal depth two
(Theorem~\ref{thm:Paley}): their fourth-leading coefficient is not
eventually polynomial in $q$.

Section~3 connects two-level polynomials with strongly polynomial graph
sequences. We prove that the chromatic and characteristic polynomials of every
strongly polynomial graph sequence have infinite depth
(Theorem~\ref{thm:stronglypolynomial}), thus giving a common explanation
for the examples of Kneser graphs, Johnson graphs, and powers of complete graphs. 
The proof expresses each coefficient as a finite linear combination of counts of fixed subgraphs.

Finally, the identity $\chi(G_q,n)=\hom(G_q,K_n)$ leads us to consider
$\hom(G_q,H_n)$ when both graph sequences are strongly polynomial.
We give examples showing that infinite depth need not hold, and study
target families for which it holds for every strongly polynomial source
family. These include fixed categorical powers of complete graphs and an
ordered version of Kneser graphs. We conclude with questions about
their classification and the possible depth functions of homomorphism
counts.


  
\section{Characteristic polynomials vs. chromatic polynomials} \label{sec:contrast}
We recall the definitions of two important polynomials associated to a graph $G$. 

\begin{defn} Let $G$ be a graph.
\begin{enumerate}
\item The \emph{characteristic polynomial} $\psi_G(x)$ is the characteristic polynomial of the adjacency matrix $A(G)$. 
\item The \emph{chromatic polynomial} $\chi_G(x)$ is the unique polynomial such that for each natural number $n$, $\chi_G(n)$ equals the number of proper $n$-colorings of $G$.  
\end{enumerate}
\end{defn}

In this section we will prove an infinite-depth result for the characteristic polynomials of a certain type of graph, as follows. 

\begin{defn} \cite{GR}
A graph $G$ is \emph{strongly regular} with parameters $(v,k,\lambda,\mu)$ if it has $v$ vertices, is $k$-regular, every two adjacent vertices have exactly $\lambda$ common neighbors, and every two non-adjacent vertices have exactly $\mu$ common neighbors.  
\end{defn}
The fact that the characteristic polynomials of strongly regular graphs should be well-behaved is suggested by the following complete classification of their spectra.

\begin{prop} \cite[\S 10.2]{GR} \label{prop:spectrum}
The spectrum of a connected, noncomplete strongly regular graph with parameters
$\operatorname{SRG}(v,k,\lambda,\mu)$ consists of the eigenvalues
\[
k,\qquad
r=\frac{(\lambda-\mu)+\Delta}{2},
\qquad
s=\frac{(\lambda-\mu)-\Delta}{2},
\]
where
\[
\Delta=\sqrt{(\lambda-\mu)^2+4(k-\mu)}.
\]
The corresponding multiplicities of these eigenvalues are, respectively,
\[
1,\qquad
m_r=\frac{1}{2}
\left(
(v-1)
-
\frac{2k+(v-1)(\lambda-\mu)}{\Delta}
\right),
\qquad
m_s=\frac{1}{2}
\left(
(v-1)
+
\frac{2k+(v-1)(\lambda-\mu)}{\Delta}
\right).
\]
\end{prop}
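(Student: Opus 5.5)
The statement to prove is the classical description of the spectrum of a connected, noncomplete strongly regular graph. The argument rests on the quadratic matrix identity satisfied by the adjacency matrix $A$, combined with the trace conditions that pin down the multiplicities.

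\emph{Step 1: the quadratic identity.} For $u\neq w$, the entry $(A^2)_{uw}$ counts the common neighbours of $u$ and $w$. It therefore equals $\lambda$ when $u\sim w$ and $\mu$ otherwise, while $(A^2)_{uu}=k$. This gives
\[
A^2 = kI + \lambda A + \mu (J - I - A),
\qquad\text{that is,}\qquad
A^2 - (\lambda-\mu)A - (k-\mu)I = \mu J .
\]

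\emph{Step 2: the eigenvalue $k$.} Since $G$ is $k$-regular, the all-ones vector $\mathbf{1}$ is an eigenvector with eigenvalue $k$. Because $G$ is connected, Perron--Frobenius (or a maximum-entry argument) shows that $k$ has multiplicity $1$.

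\emph{Step 3: the eigenvalues $r$ and $s$.} The matrix $A$ is symmetric, so we may choose an orthonormal eigenbasis containing $\mathbf{1}/\sqrt v$; every other eigenvector $x$ is orthogonal to $\mathbf{1}$. For such $x$ we have $Jx=0$, so its eigenvalue $\theta$ satisfies
\[
\theta^2-(\lambda-\mu)\theta-(k-\mu)=0 .
\]
The roots of this quadratic are exactly $r$ and $s$ with the stated $\Delta$. Moreover $\Delta>0$: noncompleteness gives $\mu\ge 1$ when there are nonadjacent vertices, and $k>\mu$ because, for a nonadjacent pair in a connected noncomplete graph, $\mu\le k$ and equality forces a degenerate case. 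So $r\ne s$.

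\emph{Step 4: the multiplicities.} The multiplicities satisfy two linear equations:
\[
1+m_r+m_s=v, \qquad k+m_r r+m_s s=\operatorname{tr}A=0 .
\]
Write $m_r+m_s=v-1$ and $m_r-m_s=D$. Using $r+s=\lambda-\mu$ and $r-s=\Delta$, the trace equation becomes
\[
k+\tfrac12(v-1)(\lambda-\mu)+\tfrac12\Delta D=0 ,
\]
so $D=-\bigl(2k+(v-1)(\lambda-\mu)\bigr)/\Delta$. Solving the two equations then yields the displayed formulas for $m_r$ and $m_s$.

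\emph{Main obstacle.} The only delicate point is confirming that neither $r$ nor $s$ coincides with $k$, and that both genuinely occur, so that the three-eigenvalue description is accurate rather than degenerate.

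\begin{itemize}
\item $k$ is a simple eigenvalue by connectivity.
\item If some $\theta\in\{r,s\}$ equaled $k$, its eigenvector could be taken orthogonal to $\mathbf{1}$, which would contradict simplicity.
\item Both $r$ and $s$ occur. If only one of them did, $A$ would have just two distinct eigenvalues. A connected graph with two distinct eigenvalues has diameter $1$, so it is complete, which is excluded.
\end{itemize}

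The remaining algebra is routine.
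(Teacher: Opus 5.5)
Your argument is the standard one, and it is what the cited source does: the quadratic identity $A^2-(\lambda-\mu)A-(k-\mu)I=\mu J$, restriction to $\mathbf{1}^\perp$, and the two linear equations from $\operatorname{tr} I$ and $\operatorname{tr} A$. The paper itself gives no proof, only the citation to Godsil--Royle, though it uses the same identity in the proof of Theorem~\ref{thm:stronglyregular}. Your outline and the multiplicity algebra are correct. However, one justification in Step 3 is false.

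It is not true that $k>\mu$ in every connected noncomplete strongly regular graph. The graphs $K_{a,a}$ with $a\ge 2$, which the paper itself lists as strongly regular, have parameters $(2a,a,0,a)$, so $k=\mu$. The same holds for every complete multipartite graph with equal parts, so this is not a ``degenerate case'' you may discard. The conclusion $\Delta>0$ is still true. The graph has an edge, so $\lambda\le k-1$, and hence $\mu=k$ forces $(\lambda-\mu)^2\ge 1$. Alternatively, your third bullet already rules out $r=s$, since then $A$ would have at most two distinct eigenvalues.

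Also, $\mu\ge 1$ comes from connectivity, not from noncompleteness: $2K_3$ is a noncomplete strongly regular graph with $\mu=0$. Your argument does not need it. It does give a direct check that $k$ is not a root of the quadratic: substituting $\theta=k$ and using $k(k-\lambda-1)=\mu(v-k-1)$ yields $\mu v\neq 0$.

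Finally, your second bullet assumes that $\theta=k$ actually occurs as an eigenvalue on $\mathbf{1}^\perp$. That fact comes from your third bullet, so the third bullet should logically come first.
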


There are relatively few known families of strongly regular graphs, but they do include Cartesian products of complete graphs $K_a \square K_a$ and complete bipartite graphs $K_{a,a}$. An additional example is given by a subclass of the following important class of graphs. 

\begin{defn}
For positive integers $a$ and $k$, the \emph{Kneser graph} $\Kn(a,k)$ is the graph on the $k$-subsets of $[a]$ for which two subsets $S, T$ are connected by an edge whenever $S \cap T = \emptyset$. 
\end{defn}

Kneser graphs of the form $\Kn(a,2)$ are strongly regular. However, for $k \geq 3$, the Kneser graph $\Kn(a,k)$ is not strongly regular because non-adjacent vertices are given by sets $S,T$ whose intersection can be of any size between $1$ and $k-1$, and the size of the intersection affects the number $N(S,T)$ of common neighbors of $S$ and $T$. For example, in $\Kn(a,3)$, we have 
\[ N(S,T) = \begin{cases} \binom{a-6}{3} & \text{if } S \cap T = \emptyset \\ \binom{a-5}{3} & \text{if } |S \cap T| = 1 \\ \binom{a-4}{3} & \text{if } |S \cap T| = 2 \end{cases}. \]

We will show that the characteristic polynomials of families of strongly regular graphs yield two-level polynomials of infinite depth. In particular, this reproves this result for Cartesian products of two complete graphs $K_q \times K_q$ and for the Kneser graphs $\Kn(q,2)$ in \cite{BW} in an entirely different way. However, we do not recover this result for $K_q^k$ nor for $\Kn(q,k)$ when $k \geq 3$. 

\begin{thm}\label{thm:stronglyregular}
Let $Q\subseteq\mathbb N$ be an infinite set, and let $\{G_q\}_{q\in Q}$ be a
family of strongly regular graphs with parameters
$v(q),k(q),\lambda(q),\mu(q)\in\mathbb Q[q]$.
Write
\[
\psi_{G_q}(x)=\sum_{r=0}^{v(q)}c_r(q)x^{v(q)-r},
\qquad c_r(q)=0\quad\text{for }r>v(q).
\]
For every $r\geq0$, $c_r(q)$ is a polynomial for all $q\in Q$.
Thus the characteristic polynomials form a two-level polynomial with degree
function $v$ and infinite depth over $Q$.
\end{thm}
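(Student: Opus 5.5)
Our plan is to use power sums of eigenvalues and Newton's identities, as the introduction indicates. Let $A_q$ be the adjacency matrix of $G_q$ and set $p_j(q)=\operatorname{tr}(A_q^j)$. The coefficients of $\psi_{G_q}(x)=\det(xI-A_q)$ are $c_r(q)=(-1)^r e_r(q)$, where $e_r$ is the $r$-th elementary symmetric function of the eigenvalues. Newton's identities give
\[
r\,e_r=\sum_{i=1}^{r}(-1)^{i-1}e_{r-i}\,p_i .
\]
These identities hold for all $r\ge 0$, even when $r>v(q)$, provided the eigenvalue list has length $v(q)$. In that range the $e_r$ vanish automatically, consistent with the convention $c_r(q)=0$. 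So by induction on $r$ it is enough to show that each $p_j(q)$ is a polynomial in $q$ on $Q$. The division by $r$ is harmless, since we only need polynomials in $\mathbb Q[q]$.

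To compute $p_j$ we avoid the eigenvalue formulas of Proposition~\ref{prop:spectrum}, which involve the square root $\Delta$, and work directly in the algebra generated by $A=A_q$. Strong regularity gives
\[
A^2 = kI+\lambda A+\mu(J-I-A),
\]
with $AJ=JA=kJ$ and $J^2=vJ$. By induction on $j$, each power $A^j$ lies in the span of $I,A,J$:
\[
A^j=\alpha_j I+\beta_j A+\gamma_j J .
\]
The coefficients satisfy recurrences whose coefficients are polynomial in $v,k,\lambda,\mu$:
\[
\alpha_{j+1}=\beta_j(k-\mu),\qquad \beta_{j+1}=\alpha_j+\beta_j(\lambda-\mu),\qquad \gamma_{j+1}=\beta_j\mu+\gamma_j k .
\]
Starting from $(\alpha_0,\beta_0,\gamma_0)=(1,0,0)$, each $\alpha_j,\beta_j,\gamma_j$ is therefore a polynomial in $q$ for fixed $j$. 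Since $\operatorname{tr}I=v$, $\operatorname{tr}A=0$ and $\operatorname{tr}J=v$, we get
\[
p_j=(\alpha_j+\gamma_j)\,v,
\]
which is a polynomial in $q$. This argument covers disconnected and complete strongly regular graphs as well, with no case split.

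The one point needing care is that the identity $c_r(q)=\varphi_r(q)$ must hold at every $q\in Q$ for each fixed $r$, including small $q$ where $r>v(q)$. Newton's identities over a list of exactly $v(q)$ eigenvalues already force $e_r=0$ there, so the polynomial $\varphi_r$ produced by the recursion agrees with $c_r(q)$ on all of $Q$. The degree function is $v$, and the conclusion is infinite depth over $Q$. The only step of any substance is the closure of $\mathrm{span}\{I,A,J\}$ under multiplication by $A$; everything else is bookkeeping.
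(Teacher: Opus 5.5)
Your proposal is correct and follows the paper's proof essentially step for step. Both arguments use the same recurrence to put $A_q^j$ in the span of $I,A,J$ with polynomial coefficients, then take traces to get $\operatorname{tr}(A_q^j)=v(q)(\alpha_j+\gamma_j)$, and conclude by induction on $r$ via Newton's identities. Your observation that Newton's identities remain valid for $r>v(q)$, so the recursion's polynomial agrees with $c_r(q)=0$ at small $q$, makes explicit a point the paper leaves implicit.
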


\begin{proof}
Let $A_q$ be the adjacency matrix of $G_q$, and let $I_q$ and $J_q$ be
the identity and all-ones matrices of order $v(q)$, respectively.
For vertices $u,w$, the entry $(A_q^2)_{uw}$ counts the walks of
length two from $u$ to $w$. If $u=w$, this number is $k(q)$; if
$u\neq w$, it is the number of common neighbors of $u$ and $w$, namely
$\lambda(q)$ when they are adjacent and $\mu(q)$ otherwise. Hence
\begin{align*}
A_q^2&=k(q)I_q+\lambda(q)A_q+\mu(q)(J_q-I_q-A_q)\\
&=a_j(q)A_q +b_j(q)\left( k(q)I_q + \lambda(q) A_q + \mu(q)(J_q-I_q-A_q) \right) +d_j(q)k(q)J_q \\ 
&=(k(q)-\mu(q))I_q+(\lambda(q)-\mu(q))A_q+\mu(q)J_q.
\end{align*}
Also, $A_qJ_q=k(q)J_q$, since every row of $A_q$ has sum $k(q)$.

We claim that, for each fixed $j\geq0$, there exist polynomials
$a_j(x),b_j(x),d_j(x)\in\mathbb Q[x]$ such that
\[
A_q^j=a_j(q)I_q+b_j(q)A_q+d_j(q)J_q
\qquad(q\in Q).
\]
For $j=0$, take $(a_0,b_0,d_0)=(1,0,0)$. If the claim holds for $j$,
multiplication by $A_q$ gives
\begin{align*}
A_q^{j+1}
&=a_j(q)A_q+b_j(q)A_q^2+d_j(q)A_qJ_q\\
&=(k(q)-\mu(q))b_j(q)I_q+
\bigl(a_j(q)+(\lambda(q)-\mu(q))b_j(q)\bigr)A_q+
\bigl(\mu(q) b_j(q)+k(q)d_j(q)\bigr)J_q.
\end{align*}
The three coefficients are again polynomials in $q$, completing the
induction.
Since $\operatorname{tr}(A_q)=0$, it follows that, for every fixed $j\geq1$,
\[\operatorname{tr}(A_q^j)
=v(q)\bigl(a_j(q)+d_j(q)\bigr)
\]
is a polynomial in $\mathbb Q[q]$.

Now, Newton's identities for the roots of a polynomial in terms of its coefficients give
\[
r c_r(q)=-\sum_{j=1}^r c_{r-j}(q)\operatorname{tr}(A_q^j)
\qquad(r\geq1),
\]
with $c_0(q)=1$. 
Induction on $r$ therefore shows that $c_r(q)$ is polynomial in $q$ for all $r$.
\end{proof}

\subsection{Paley graphs}
\begin{defn}
  Let $q$ be a prime such that $q \equiv 1$ (mod 4). The \emph{Paley graph} $P_q$ is the graph on $\Z / q\Z$ where two vertices $x, y$ are adjacent if $x-y$ is a quadratic residue (mod $q$).
\end{defn}

The condition that $q \equiv 1$ (mod 4) guarantees that this edge relation is symmetric. Paley graphs are known to form a quasirandom family \cite[\S9]{AS}. Furthermore, they are strongly regular.

  \begin{prop} \label{prop:PaleySRG} \cite[\S 10.3]{GR}
    The graph $P_q$ is strongly regular with parameters $(q,\frac{q-1}{2},\frac{q-5}{4},\frac{q-1}{4})$. 
  \end{prop}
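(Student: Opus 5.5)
To show that $P_q$ is strongly regular with parameters $\bigl(q,\frac{q-1}{2},\frac{q-5}{4},\frac{q-1}{4}\bigr)$, I would count neighbourhoods and common neighbourhoods directly from the field structure of $\mathbb{F}_q=\Z/q\Z$. Let $R$ be the set of nonzero squares and $N$ the set of nonsquares, so $|R|=|N|=\frac{q-1}{2}$. Since $q\equiv 1 \pmod 4$, $-1$ is a square, so $R=-R$ and adjacency is symmetric. The graph has $q$ vertices. Each vertex $x$ has neighbourhood $x+R$, which gives $k=\frac{q-1}{2}$.

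Next I use symmetry to reduce to a few representative pairs. The map $x\mapsto x+c$ is an automorphism. For $s\in R$, the map $x\mapsto sx$ is also an automorphism, since it preserves $R$. For $t\in N$, the map $x\mapsto tx$ sends $P_q$ to its complement. Hence the common neighbours of an adjacent pair $(x,y)$ can be counted at the pair $(0,1)$. For a nonadjacent pair it is enough to consider $(0,t)$ with $t\in N$, and scaling by $t$ transfers this to a count at $(0,1)$ for the complementary relation.

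The core computation uses the quadratic character $\eta$. The number of $z\neq 0,1$ with $z\in R$ and $z-1\in R$ is
\[
\frac14\sum_{z\neq 0,1}\bigl(1+\eta(z)\bigr)\bigl(1+\eta(z-1)\bigr).
\]
The linear sums are $\sum_{z\neq0,1}\eta(z)=-\eta(1)=-1$ and $\sum_{z\neq 0,1}\eta(z-1)=-\eta(-1)=-1$. The standard Jacobsthal identity gives $\sum_{z}\eta(z)\eta(z-1)=\sum_{z\neq0}\eta(1-z^{-1})=-1$. Therefore the count is $\frac14\bigl((q-2)-1-1-1\bigr)=\frac{q-5}{4}=\lambda$.

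For $\mu$, the common neighbours of $0$ and $t\in N$ are the $z$ with $z\in R$ and $z-t\in R$. Substituting $z=tw$, this becomes $w\in N$ and $w-1\in N$. The same character sum, now with $(1-\eta(z))(1-\eta(z-1))$, gives $\frac14\bigl((q-2)+1+1-1\bigr)=\frac{q-1}{4}$.

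The only nonroutine step is evaluating $\sum_z \eta(z)\eta(z-1)=-1$. I would handle it by the substitution $z\mapsto z^{-1}$ shown above, which turns the sum into a sum of $\eta$ over all elements except $1$ and $0$. As a consistency check, the parameters should satisfy $k(k-\lambda-1)=(v-k-1)\mu$. Both sides equal $\frac{(q-1)^2}{8}$, which confirms that the value of $\lambda$ determines $\mu$.
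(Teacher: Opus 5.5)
Your proof is correct. The paper gives no proof of this proposition and simply quotes it from Godsil--Royle, so your character-sum computation works as a self-contained verification. All four counts check out, including $\sum_{z}\eta(z)\eta(z-1)=-1$ via $z\mapsto z^{-1}$. There is one tiny imprecision there: as $z$ runs over $\mathbb{F}_q^{\times}$, $1-z^{-1}$ runs over $\mathbb{F}_q\setminus\{1\}$, which includes $0$; since $\eta(0)=0$, nothing changes.

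A common alternative avoids character sums entirely.
\begin{itemize}
\item The maps $x\mapsto ax+b$ with $a\in R$ are transitive on ordered adjacent pairs. Since multiplication by squares is transitive on $N$, they are also transitive on ordered nonadjacent pairs, so $\lambda$ and $\mu$ are constant.
\item The map $x\mapsto tx$ with $t\in N$ makes $P_q$ self-complementary.
\item The complement of an $\operatorname{SRG}(v,k,\lambda,\mu)$ is an $\operatorname{SRG}(v,v-k-1,v-2k-2+\mu,v-2k+\lambda)$. Self-complementarity therefore gives $k=(q-1)/2$ and $\mu=\lambda+1$.
\item Then $k(k-\lambda-1)=(v-k-1)\mu=k\mu$ gives $\mu=k-\lambda-1=k-\mu$. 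Hence $\mu=k/2=(q-1)/4$ and $\lambda=(q-5)/4$.
\end{itemize}

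That route uses only symmetry and standard parameter identities, whereas yours needs the multiplicativity of $\eta$ and the Jacobsthal evaluation. In exchange, your argument computes the counts explicitly. It is also the kind of argument that still works for larger configurations, where transitivity no longer determines the answer. An example is the $4$-clique count of Theorem~\ref{thm:4cliques}, which the paper uses to show the codegree-three coefficient is not polynomial.
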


    By Theorem \ref{thm:stronglyregular}, it follows that the characteristic polynomials of the family of Paley graphs yield a two-level polynomial of infinite depth over the set of primes congruent to $1$ modulo $4$.



In contrast to the characteristic polynomial, we will show that the depth of the family of chromatic polynomials of Paley graphs $P_q$ is only two; that is, they do not exhibit two-level behavior. To prove this, we must show that while the three leading coefficients of $\chi_{P_q}(n)$ are given by polynomials in $q$, the coefficient of $n^{q-3}$ is not. Write
\begin{equation} \chi_{P_q}(n) = \sum_{r=0}^\infty c_r(q)n^{q-r}  \label{eq:Paley}  \end{equation}
where $c_r(q) = 0$ for $r > q$ and $c_r(q)$ is only defined when $q$ is a prime congruent to 1 (mod 4).  


\begin{prop}
The three leading coefficients in \ref{eq:Paley} are given by polynomials in $q$. 
\end{prop}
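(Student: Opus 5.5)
We use the standard description of the top coefficients of a chromatic polynomial, via Whitney's broken-circuit theorem. For any simple graph $G$ with $v$ vertices and $m$ edges,
\[
\chi_G(n)=n^{v}-m\,n^{v-1}+\Bigl(\binom{m}{2}-t\Bigr)n^{v-2}-\cdots,
\]
where $t$ is the number of triangles of $G$. This formula can also be derived directly from the subgraph expansion $\chi_G(n)=\sum_{S\subseteq E}(-1)^{|S|}n^{c(S)}$. A spanning subgraph $(V,S)$ has $c(S)=v-\operatorname{rank}(S)$ components. Codegree $0$ arises only from $S=\emptyset$. Codegree $1$ arises only from single edges. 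Codegree $2$ arises from edge sets of rank $2$: any two edges, together with the edge sets of size $3$ forming a triangle. These give the terms $\binom{m}{2}$ and $-t$ respectively. No other edge set has rank at most $2$, since a simple graph has no multi-edges. So we may take $c_0=1$, $c_1=-m$, $c_2=\binom{m}{2}-t$.

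The plan is then to evaluate $m$ and $t$ for $P_q$ using Proposition~\ref{prop:PaleySRG}. Because $P_q$ is $\frac{q-1}{2}$-regular on $q$ vertices, we get $m=\frac{q(q-1)}{4}$. Each edge lies in exactly $\lambda=\frac{q-5}{4}$ triangles, so
\[
t=\frac{m\lambda}{3}=\frac{q(q-1)(q-5)}{48}.
\]
Alternatively, $t=\operatorname{tr}(A^3)/6$, which is polynomial by the proof of Theorem~\ref{thm:stronglyregular}.

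Substituting these values, $c_0=1$, $c_1=-\frac{q(q-1)}{4}$, and $c_2=\binom{q(q-1)/4}{2}-\frac{q(q-1)(q-5)}{48}$. All three are polynomials in $q$, valid for every prime $q\equiv 1 \pmod 4$. For such $q$ we have $q\ge 5$, so these codegrees do not exceed the degree $q$, and no truncation issue arises.

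The only step needing care is the codegree-$2$ count. There we must verify that triangles are the only circuits that can contribute at rank $2$. This holds because a cycle of length $\ell$ has rank $\ell-1$, so only triangles reach rank $2$, and longer cycles first appear at codegree $3$. It is exactly at codegree $3$ that counts of $4$-cycles and related configurations enter. Such counts are not determined by the strongly regular parameters alone, which is consistent with the failure of polynomiality shown later.
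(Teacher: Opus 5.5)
Your proposal is correct and follows the paper's proof: $c_0=1$, $c_1=-m$, $c_2=\binom{m}{2}-t$, with $m=\frac{q(q-1)}{4}$ and $t=\frac{m\lambda}{3}$ read off from the strongly regular parameters. The only difference is that you derive the codegree-two formula from the subgraph expansion, where the paper simply cites it as well known; one small caveat on your closing aside is that the number of not-necessarily-induced $4$-cycles is in fact determined by the parameters via $\operatorname{tr}(A^4)$, so the non-polynomial ingredient at codegree three is really the $K_4$ count $t_2$ (equivalently the induced $4$-cycle count), not $4$-cycles as such.
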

\begin{proof}
  The leading coefficient of the chromatic polynomial of any graph is one. The codegree one coefficient is $-m$, where $m$ is the number of edges. By Proposition \ref{prop:PaleySRG}, for $P_q$ we have $m =\frac{q(q-1)}{4}$, which is a polynomial. It is well known that the codegree two coefficient is $\binom{m}{2}-t$, where $t$ is the number of triangles. In the case of Paley graphs, we have by Proposition \ref{prop:PaleySRG} that $t = \frac{m\lambda}{3}= \frac{q(q-1)(q-5)}{48}$. 
\end{proof}

\begin{thm} \label{thm:Paley}
The sequence $\chi_{P,q}(n)$ is a two-level polynomial of degree function $q$ and maximal depth function two.  
\end{thm}

Farrell \cite[Theorem 1]{Farrell} showed that the codegree three coefficient of the chromatic polynomial of any graph $G$ with $m$ edges, $\Delta$ triangles, $t_1$ induced 4-cycles, and $t_2$ induced copies of $K_4$ is
\[ -\binom{m}{3} + (m-2)\Delta + t_1 - 2t_2.\]
To prove that the codegree three coefficient is not a polynomial in the case of Paley graphs, we combine this formula with the following result of Evans, Pulham and Sheehan.

\begin{thm}\cite{evans1981number} \label{thm:4cliques}
The number of 4-cliques in the Paley graph of order $q$ is 
\[ t_2(q)=\frac{q(q-1)\left((q-9)^2 - 4y(q)^2\right)}{2^9 \cdot 3},\]
where $y(q)$ is the unique positive even number $y$ such that $q=x^2+y^2$.
\end{thm}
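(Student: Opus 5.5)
The approach is to count $4$-cliques through a fixed edge using the symmetry of $P_q$, and then to evaluate the resulting character sums.

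\textbf{Reduction to one edge.} The maps $x\mapsto ax+b$, with $a$ a nonzero square mod $q$, are automorphisms of $P_q$ and act transitively on its edges. Every edge therefore lies in the same number $K$ of $4$-cliques. By Proposition~\ref{prop:PaleySRG}, $P_q$ has $q(q-1)/4$ edges, and each $4$-clique contains $6$ edges, so
\[
t_2(q)=\frac{q(q-1)}{4}\cdot\frac{K}{6}.
\]
We may take the edge to be $\{0,1\}$. Then $K$ is the number of edges inside the common neighbourhood $N=\{u: u,\ u-1 \text{ nonzero squares}\}$, which has $|N|=\lambda=(q-5)/4$.

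\textbf{Character-sum expansion.} Let $\chi$ be the quadratic character mod $q$, and use $\mathbf 1[z\text{ is a nonzero square}]=(1+\chi(z))/2$ for $z\neq0$. Then
\[
2K=\sum_{\substack{u\ne w\\ u,w\notin\{0,1\}}}\frac{(1+\chi(u))(1+\chi(u-1))(1+\chi(w))(1+\chi(w-1))(1+\chi(u-w))}{32}.
\]
Expanding gives $32$ terms, with boundary corrections for excluded values that are routine. Since $\chi(-1)=1$, most terms reduce to sums of $\chi$ over linear or quadratic polynomials in one variable. These are evaluated exactly: $\sum_z\chi(z)=0$, and $\sum_z\chi(z^2+bz+c)=-1$ whenever the discriminant is nonzero. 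The terms that involve both $u$ and $w$ with $\chi(u-w)$ reduce similarly, after substituting $w=u+s$, to products of such sums, with one exception. Together these contributions give the polynomial part $(q-9)^2$ of the formula.

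\textbf{The main obstacle.} The exceptional term is
\[
T=\sum_{u,w}\chi\big(u(u-1)w(w-1)(u-w)\big).
\]
I would substitute $w=ut$ for $u\ne0$. This turns $T$ into
\[
T=\sum_t \chi\big(t(1-t)\big)\,a(t), \qquad a(t)=\sum_u\chi\big(u(u-1)(ut-1)\big).
\]
Here $a(t)$ is, up to sign, the trace of Frobenius of a Legendre-family elliptic curve. The goal is to show $T$ equals a constant multiple of $y^2$ plus a polynomial correction, where $q=x^2+y^2$ with $y$ even. The route is through Jacobsthal sums. For $q\equiv1\pmod4$, the classical evaluation
\[
\sum_u\chi\big(u(u^2-1)\big)=\pm 2x
\]
(for the appropriate choice of the odd part $x$) is the point count of the CM curve $Y^2=X^3-X$. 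One can also use the second-moment identity $\sum_t a(t)^2$ together with Jacobi-sum evaluations $J(\chi,\psi)$ for a quartic character $\psi$, whose norm gives $x^2+y^2=q$. I would first try to rewrite $T$ as $\sum_t a(t)\,a'(t)$ and evaluate it through the quartic Jacobi sums $J(\psi,\psi)=x+iy$, since $\operatorname{Re}$ and $\operatorname{Im}$ of $J(\psi,\psi)^2$ produce $x^2-y^2=q-2y^2$. That is exactly the source of the $-4y^2$ in the formula. Finally, I would collect the terms and check the result against small primes, for example $q=13$ and $q=17$, where $t_2=0$, to fix signs and constants.
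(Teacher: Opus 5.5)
The paper does not prove this statement; it quotes Evans--Pulham--Sheehan, so your sketch has to stand on its own. Your reduction is sound. Edge-transitivity gives $t_2(q)=\frac{q(q-1)}{24}K$, so the theorem is equivalent to $64K=(q-9)^2-4y^2$, and in your expansion every term other than $T$ is indeed elementary. But evaluating $T$ \emph{is} the arithmetic content of the theorem, and the proposal does not carry it out. There is a genuine gap here.

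You name three tools (a Jacobsthal sum, a second moment of $a(t)$, quartic Jacobi sums), but none of them is connected to $T$ by an actual identity. The second-moment idea does not apply as stated. Indeed, $T=\sum_\lambda\chi(\lambda(\lambda-1))\,s(\lambda)$ with $s(\lambda)=\sum_u\chi(u(u-1)(u-\lambda))$, which is a twisted \emph{first} moment of the Legendre family, not a second moment. Checking $q=13$ and $q=17$ also cannot close the gap. Such checks fix constants only after you have proved that $T=\alpha y^2+\beta q+\gamma$ with $\alpha,\beta,\gamma$ independent of $q$, and proving that is the whole difficulty.

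Your bookkeeping is also off. Carrying out the elementary sums gives $64K=q^2-20q+81+T=(q-9)^2-2q+T$, so those terms do not produce $(q-9)^2$. What must be shown is exactly $T=2q-4y^2=4x^2-2q$ with $x$ odd. Substituting $w\mapsto 1/w$ turns $T$ into $\sum_{u,v}\chi\bigl(u(1-u)v(1-v)(1-uv)\bigr)$. Up to normalization this is Greene's ${}_3F_2(1)$ over $\mathbb F_q$, whose evaluation is a genuine theorem rather than a routine manipulation.

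One workable route is to first prove $T=\bigl(\sum_u\chi(u^3-u)\bigr)^2-2q$, that is, $T=\pi^2+\bar\pi^2$ for the Frobenius $\pi$ of $Y^2=X^3-X$. You would then apply the Jacobsthal evaluation $\sum_u\chi(u^3-u)=\pm 2x$ that you quote. The value your Jacobi-sum heuristic points to, $2\operatorname{Re}J(\psi,\psi)^2=2(x^2-y^2)$, is in fact correct, but nothing in the proposal derives it.
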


\begin{lem} \label{lem:4cliques}
The expression for $t_2(q)$ given in Theorem \ref{thm:4cliques} is not a polynomial in $q$ for all sufficiently large $q$.
\end{lem}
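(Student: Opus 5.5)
We argue by contradiction. Suppose there is a polynomial $P\in\Q[q]$ with $t_2(q)=P(q)$ for all sufficiently large primes $q\equiv 1 \pmod 4$. Multiplying by $2^9\cdot 3$ and dividing by $q(q-1)$, which is nonzero for these $q$, gives
\[
4y(q)^2 = (q-9)^2 - \frac{2^9\cdot 3\,P(q)}{q(q-1)}.
\]
Hence $y(q)^2 = R(q)$ for a rational function $R\in\Q(q)$, on an infinite set of primes. The plan is to show that no rational function can take these values.

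The key constraint is the size bound $0<y(q)<\sqrt q$, which holds because $q=x^2+y^2$ with $x$ odd and nonzero. A rational function $R$ satisfying $0<R(q)<q$ on an infinite set has at most linear growth. So $R(q)=\alpha q+\beta+O(1/q)$ for some constants $\alpha\in[0,1]$ and $\beta$.

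Since $y(q)^2$ is an integer, we can sharpen this. Write $R=A/B$ with $A,B\in\Z[q]$ after clearing denominators. Polynomial division gives $R = L(q) + C(q)/B(q)$ with $L$ linear and $\deg C<\deg B$. Then $B(q)y(q)^2 - B(q)L(q) = C(q)$. Multiplying by the common denominator $D$ of the coefficients of $L$, the left side becomes an integer multiple of $B(q)$, while $|C(q)|<|B(q)|/D$ for large $q$. This forces $C(q)=0$ for all large $q$, so $C\equiv 0$ and $y(q)^2=L(q)=\alpha q+\beta$ exactly, with $\alpha,\beta\in\Q$.

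The main obstacle is to rule out $y(q)^2=\alpha q+\beta$ on all large primes $q\equiv 1\pmod 4$. I would do this using the distribution of the representations $q=x^2+y^2$. Such an identity would force $x(q)^2=(1-\alpha)q-\beta$, so both $x^2/q$ and $y^2/q$ would tend to fixed constants. This contradicts the equidistribution of the angles of Gaussian primes (Hecke), or more elementarily the existence of infinitely many primes of the form $x^2+y^2$ with $y$ in any prescribed ratio window. For a more elementary route, I would first try the following. The case $\alpha=0$ forces $y$ to be eventually constant, say $y=y_0$. But there are infinitely many primes $q\equiv 1\pmod 4$ whose even part $y(q)$ differs from $y_0$; for example, the primes $x^2+4$ (if infinitely many exist) have $y=2$, and one needs unconditional input here. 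So the cleanest path is to invoke the fact that for each fixed even $y$ only finitely many $q$ can satisfy $y(q)=y$ eventually along the sequence, together with the angular equidistribution. I expect this number-theoretic input to be the genuine difficulty, and would cite Hecke's theorem on Gaussian primes in sectors to conclude that $y(q)^2/q$ has no limit, contradicting linearity.
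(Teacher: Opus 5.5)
Your argument is correct. Its first half matches the paper's reduction to an identity $y(q)^2=\alpha q+\beta$: clear denominators to get $y(q)^2=R(q)$ with $R\in\Q(q)$, use $0<y(q)^2<q$ to force at most linear growth, and use integrality of $y(q)^2$ to kill the proper fractional part.

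The routes diverge at the last step. The paper stays elementary. Since $x(q)^2+y(q)^2=q$, one of $z=x$ or $z=y$ satisfies $Dz(q)^2=Aq+B$ with $A\neq 0$. It picks an auxiliary prime $\ell\ge 5$ with $\ell\nmid AD$. The $\ell-1$ distinct values $D^{-1}(Au+B)$, $u\in\mathbb F_\ell^{\times}$, cannot all lie among the $(\ell+1)/2$ squares, so some nonzero $u$ gives a nonresidue. Dirichlet's theorem with the Chinese remainder theorem then supplies arbitrarily large primes $q\equiv 1\pmod 4$, $q\equiv u\pmod \ell$, for which $z(q)^2$ would be a nonresidue modulo $\ell$.

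You instead invoke Hecke's theorem on Gaussian primes in sectors. This is much heavier input, resting on Hecke $L$-functions with angular characters. It is, however, more robust: it only needs $y(q)^2/q$ to converge. So on your route the polynomial-division and integrality step is actually superfluous, since any rational $R$ with $0<R(q)<q$ already has $R(q)/q\to\alpha$. The paper's congruence argument, by contrast, genuinely needs the exact affine identity in order to reduce it modulo $\ell$.

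To apply Hecke cleanly, use the angle of $x+iy$ folded into $[0,\pi/4]$. This is invariant under units and conjugation, so you need not track which coordinate is the even one. Convergence of $y(q)^2/q$ forces this folded angle to converge, which contradicts the existence of infinitely many split primes in every sector.

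Drop two side remarks. The ``ratio window'' statement is not more elementary than Hecke; it essentially is Hecke's theorem. The claim that each fixed even value is taken by $y(q)$ only finitely often is not known; for $y=2$ it would say there are only finitely many primes of the form $x^2+4$. Neither is needed, because the case $\alpha=0$ is already covered: the folded angle then tends to $0$.
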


\begin{proof} Suppose, for a contradiction, that $t_2(q)$ is eventually given by a polynomial. Solving for $y^2$, we obtain that $y^2$ is given by a rational expression in $q$. Write $y(q)^2=P(q)+\frac{R(q)}{G(q)}$, where $P$, $R$, and $G$ are polynomials and either $R$ is of strictly lower degree than $G$ or $R$ is the zero polynomial. In the former case, we would have $0<|\frac{R(q)}{G(q)}|<1$ for sufficiently large $q$ and $\lim_{q\to \infty}\frac{R(q)}{G(q)}=0$. Multiplying by a suitable integer $D$ such that $DP(q)\in \Z[q]$, we have $Dy(q)^2=DP(q)+D\frac{R(q)}{G(q)}$, so $D\frac{R(q)}{G(q)}$ must be an integer for all $q$, a contradiction. We conclude that $y(q)^2=P(q)$ is eventually a polynomial in $q$.

Since $q=x^2+y^2$, both $y(q)^2=P(q)$ and $x(q)^2=q-P(q)$ are eventually affine
functions of $q$. At least one has nonzero slope. Consequently, for
one of the integer-valued functions $z=x$ or $z=y$, there exist
integers $A,B,D$ with $A\neq0$ and $D>0$ such that
\[
z(q)^2=\frac{Aq+B}{D}
\]
for all sufficiently large $q\in Q$.
Choose a prime $\ell\geq5$ with $\ell\nmid AD$.
As $u$ ranges over $\mathbb F_\ell^{\times}$, the values
$D^{-1}(Au+B)$ are $\ell-1$ distinct elements of $\mathbb F_\ell$.
Only $(\ell+1)/2$ elements of $\mathbb F_\ell$ are squares, including
zero. Since $\ell-1>(\ell+1)/2$, some nonzero $u$ makes
$D^{-1}(Au+B)$ a quadratic nonresidue.
By the Chinese remainder theorem, the conditions
\[
q\equiv1\pmod4,\qquad q\equiv u\pmod\ell
\]
define a residue class coprime to $4\ell$. Dirichlet's theorem gives arbitrarily large primes
in this class. For any sufficiently large such prime, reducing the
identity for $z(q)^2$ modulo $\ell$ contradicts the choice of $u$.
\end{proof}

\begin{proof}[Proof of Theorem \ref{thm:Paley}] The codegree three coefficient is $-\binom{m}{3}+(m-2)\Delta+t_1-2t_2$, where $m:=m(q)$ is the number of edges, $\Delta:=\Delta(q)$ is the number of triangles, $t_1:=t_1(q)$ is the number of induced 4-cycles, and $t_2$ is the number of cliques of size $4$ as before. Because Paley graphs are strongly regular, $m(q)$ and $\Delta(q)$ are always polynomials, so it will suffice to show that $t_1-2t_2$ is not a polynomial. Let $t_3$ be the number of induced subgraphs which are isomorphic to $K_4$ minus a single edge.

First, notice that $2t_1+t_3$ is a polynomial. Indeed, this quantity counts the number of ways to choose two non-adjacent vertices together with two common neighbors, which is a polynomial because the graph is strongly regular. If the two chosen neighbors are non-adjacent, then the induced graph is a 4-cycle, and this graph is counted twice (according to which pair of opposite vertices is chosen). In the other case the induced graph is a $K_4$ minus an edge, counted only once. Similarly, $6t_2+t_3$ is a polynomial. In this case, we choose two adjacent vertices and then two common neighbors. A copy of $K_4$ contributes $6$ such choices, while a copy of $K_4$ minus an edge contributes exactly one. 

Therefore,
$(2t_1+t_3)-(6t_2+t_3)=2t_1-6t_2$
is a polynomial. Suppose, by contradiction, that $t_1-2t_2$ is also a polynomial $g(q)$. Then by solving the nondegenerate linear system 
\[ 2t_1 - 6t_2 = f(q), \, t_1-2t_2 = g(q)\]
we obtain that $t_2$ is also a polynomial. But this contradicts Lemma \ref{lem:4cliques}, completing the proof.
\end{proof}

\section{Strongly polynomial families and two-level behavior}
The work of Bogart and Woods contains many examples of families of graphs whose chromatic and characteristic polynomials are two-level polynomials with infinite depth. The examples they develop are Kneser graphs $\Kn(q,k)$, Johnson graphs $\textup{Jn}(q,k)$, and Cartesian products of complete graphs $K_q^k$ for $k$ fixed. It is natural to ask if it is possible to systematically construct a wider variety of graph families that also have this property. In this section we show that this property holds for any strongly polynomial sequence of graphs, a notion first defined in \cite{Jaeger}. Moreover, all the particular examples in Bogart and Woods are indeed families of this kind.

In this section, we denote by $\textbf{Hom}(F,G)$ the set of homomorphisms between two graphs $F=(V,E)$, $G=(V',E')$; that is, the set of functions $\zeta:F\rightarrow G$ such that every $(v,w)\in E$, we have $(\zeta(v),\zeta(w))\in E'$. We denote the cardinality of $\textbf{Hom}(F,G)$ by $\hom(F,G)$. Similarly, we denote by $\textbf{Inj}(F,G)$ the set of injective homomorphisms from $F$ to $G$ and by  $\text{inj}(F,G)$ the cardinality of this set. These two quantities are related via M\"obius inversion. 

\begin{prop}
\label{prop:inj and hom}
\cite[Lemma 2.3]{GNO} \label{prop:injhomcount}
Let $F$ and $G$ be graphs. Then
$$\text{inj}(F,G)=\sum\limits_{\theta}\mu(\theta)\hom(F/\theta,G)$$
where $\mu$ is the M\"obius function of the partition lattice of the vertices of $F$.
\end{prop}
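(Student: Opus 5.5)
We prove the identity by sorting all maps $V(F)\to V(G)$ according to the partition they induce, and then applying Möbius inversion on the partition lattice $\Pi_{V(F)}$. For a partition $\theta$ of $V(F)$, let $F/\theta$ be the graph obtained by identifying the vertices within each block of $\theta$. An edge of $F$ inside a single block becomes a loop. Since $G$ is simple, any homomorphism from a graph with a loop counts as zero, which is consistent with the formula.

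The key step is the following observation. Every homomorphism $\zeta\colon F\to G$ has a kernel partition $\ker\zeta$, in which two vertices lie in the same block exactly when $\zeta$ sends them to the same vertex. Then $\zeta$ factors uniquely as $F\to F/\ker\zeta \to G$, and the second map is an injective homomorphism. Conversely, fix $\sigma$ and compose the quotient map $F\to F/\sigma$ with an injective homomorphism $F/\sigma\to G$. The result is a homomorphism $F\to G$ with kernel exactly $\sigma$. The one point to check is that edges of $F/\sigma$ are precisely the images of edges of $F$, so that adjacency is preserved in both directions of this correspondence.

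More generally, homomorphisms $F/\theta\to G$ correspond bijectively to homomorphisms $\zeta\colon F\to G$ with $\ker\zeta\ge\theta$ (coarser than or equal to $\theta$). Combining this with the factorization, and grouping the maps by their kernel $\sigma$, we obtain
\[
\hom(F/\theta,G)=\sum_{\sigma\ge\theta}\operatorname{inj}(F/\sigma,G).
\]
Here we use that the partitions of $V(F/\theta)$ are in bijection with the interval $[\theta,\hat 1]$, and that $(F/\theta)/\sigma=F/\sigma$.

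Möbius inversion on the lattice $\Pi_{V(F)}$ then gives
\[
\operatorname{inj}(F/\theta,G)=\sum_{\sigma\ge\theta}\mu(\theta,\sigma)\hom(F/\sigma,G).
\]
Taking $\theta=\hat 0$, the discrete partition, we have $F/\hat 0=F$. This yields the stated formula with $\mu(\theta):=\mu(\hat 0,\theta)$.

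The only delicate point is the bookkeeping for loops. One must verify that merging two adjacent vertices makes $\hom$ vanish, rather than silently dropping the edge, so that the correspondence between homomorphisms of $F/\theta$ and homomorphisms of $F$ with kernel at least $\theta$ really holds. The rest is formal.
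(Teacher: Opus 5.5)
Your proof is correct and follows the same route as the paper: classify homomorphisms $F\to G$ by their kernel partition, identify those with kernel exactly $\sigma$ with injective homomorphisms $F/\sigma\to G$, and apply M\"obius inversion on $\Pi_{V(F)}$. Your version is slightly more complete in two respects. You prove $\hom(F/\theta,G)=\sum_{\sigma\ge\theta}\operatorname{inj}(F/\sigma,G)$ for every $\theta$, which the inversion actually requires, and you keep edges inside blocks as loops; the paper writes the identity only at $\theta=\hat 0$ and leaves both points implicit.
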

\begin{proof}
Let $h\in \textbf{Hom}(F,G)$. Then $h$ naturally induces a partition $\theta$ of the vertices of $F$, where $v,w$ belong to the same block if $h(v)=h(w)$. Let $F/\theta$ be the graph obtained by identifying the vertices in each block of $\theta$ into a single vertex, which inherits all the edges from the vertices in the block. Then $h$ can be regarded as an injective homomorphism from $F/\theta$ to $G$. This shows that
\begin{equation}\hom(F,G)=\sum\limits_{\theta \in \Pi(V(F))}\text{inj}(F/\theta, G) \label{eq:homtoinj}\end{equation}
where $\Pi(V(F))$ is the lattice of partitions of the vertex set of $F$. Note that even if $\theta$ contains a block which is not an independent set, the number $\text{inj}(F/\theta, G)$ will be zero. By M\"obius inversion in this lattice,
\begin{equation}\text{inj}(F,G)=\sum\limits_{\theta}\mu(\theta)\hom(F/\theta,G). \qedhere \label{eq:injtohom} \end{equation} 
\end{proof}

\begin{defn}
A sequence $\{G_n\}_{n\geq 0}$ of graphs is \textit{strongly polynomial} if, for every fixed graph $F$, there exists a polynomial $p_F(n)$ such that $\hom(F,G_n)=p_F(n)$. If this polynomial exists only for $n\geq n_F$ for a constant $n_F$ (which depends on $F$), we say that the sequence is \textit{polynomial}.
\end{defn}

A simple example of a graph sequence which is polynomial but not strongly polynomial is provided by the path graph $L_n$.

\begin{prop}\label{prop:paths-polynomial}
Let $L_n$ be the path on $n$ vertices, where $n\geq1$.
Then $\{L_n\}_{n\geq1}$ is a polynomial sequence of graphs but is not strongly polynomial.
\end{prop}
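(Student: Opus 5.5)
We need to show two things: for each fixed $F$, $\hom(F,L_n)$ agrees with a polynomial in $n$ for all $n\ge n_F$; and for some $F$ this polynomial fails to match at some small $n$, so that no single polynomial works for all $n\geq 1$.

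\textbf{Polynomial for large $n$.} Since $\hom$ is multiplicative over connected components of $F$, it suffices to treat connected $F$. If $F$ is not bipartite then $\hom(F,L_n)=0$ for all $n$, which is a polynomial. If $F$ is connected and bipartite, we count homomorphisms by the image of a base vertex. A homomorphism from a connected $F$ into $L_n$ has image contained in a subpath of length at most $\operatorname{diam}(F)\le |V(F)|-1$. So we write
\[
\hom(F,L_n)=\sum_{j}\bigl(n-j+1\bigr)\,s_j(F),
\]
where $s_j(F)$ is the number of homomorphisms of $F$ onto the path $L_j$ using every vertex of the path. Precisely: each homomorphism has image a subpath $L_j$ (its image is connected in a path), and there are $n-j+1$ placements of $L_j$ in $L_n$. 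The $s_j(F)$ do not depend on $n$, and $s_j(F)=0$ for $j>|V(F)|$. Hence for $n\ge |V(F)|$ the sum is an affine function of $n$. This is valid for all $n\ge j_{\max}$. For smaller $n$, the terms with $j>n$ should contribute $0$, but the affine formula gives them $n-j+1\le 0$, possibly nonzero.

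\textbf{Not strongly polynomial.} Take $F=K_2$. Then $\hom(K_2,L_n)=2(n-1)$ for all $n\ge1$, which is a polynomial, so $K_2$ is not a counterexample. Instead take $F=L_3$, the path with 3 vertices (2 edges). Here $\hom(L_3,L_n)=\sum_v \deg(v)^2 = 2\cdot 1+(n-2)\cdot 4=4n-6$ for $n\ge2$. At $n=1$ the true value is $0$, but the formula gives $-2$. Thus no polynomial agrees with $\hom(L_3,L_n)$ for all $n\ge1$.

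The only care needed is in justifying the image-is-a-subpath decomposition, which follows from connectivity of $F$ and the fact that connected subgraphs of a path are subpaths. The rest is a direct count, and no single step is a serious obstacle.
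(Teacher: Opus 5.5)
Your proposal is correct and follows essentially the same route as the paper: decompose homomorphisms of a connected $F$ by their image interval ($n-j+1$ placements for each vertex-surjective pattern onto $L_j$), pass to disconnected $F$ by multiplicativity, and use $F=L_3$, where $\hom(L_3,L_n)=4n-6$ for $n\ge 2$ but equals $0$ at $n=1$. Your $\sum_v \deg(v)^2$ count is the same as the paper's count by the image of the central vertex, and the bipartiteness remark is harmless but unnecessary.
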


\begin{proof}
Let $F$ be a fixed nonempty connected graph with $v$ vertices, and let
$a_\ell$ be the number of vertex-surjective homomorphisms from $F$
to $L_\ell$. The image of any homomorphism from $F$ to $L_n$ is a
connected set of at most $v$ vertices, and hence an interval of the
path. There are $n-\ell+1$ intervals with $\ell$ vertices when
$\ell\leq n$, and none otherwise. For each such interval, the number
of homomorphisms with exactly that vertex image is $a_\ell$.
Therefore
\[
\hom(F,L_n)=\sum_{\ell=1}^{v}a_\ell\max\{n-\ell+1,0\}.
\]
For $n\geq v$, this agrees with the polynomial
\[
\sum_{\ell=1}^{v}a_\ell(n-\ell+1).
\]
If $F$ has connected components $F_1,\ldots,F_c$, then
\[
\hom(F,L_n)=\prod_{i=1}^{c}\hom(F_i,L_n),
\]
which is again eventually polynomial in $n$. The empty graph has
exactly one homomorphism to every $L_n$, so this also covers that case.

To see that $L_n$ is not strongly polynomial, observe that
$$\operatorname{hom}(L_3,L_n)=
\begin{cases}
0 & n=1,\\
4n-6 & n\geq 2.
\end{cases}$$
Indeed, for \(n\geq2\), the image of the central vertex of \(L_3\) can be chosen in \(n\) ways, and the remaining choices give a unique homomorphism when the image is an endpoint and \(4\) when it is an interior vertex. Thus, $
\operatorname{hom}(L_3,L_n)=2+4(n-2)=4n-6$,
which is only valid for \(n\geq2\). Hence, $L_n$ is only a polynomial sequence, but not a strongly polynomial one.
\end{proof}

The property of being a strongly polynomial sequence is preserved by many important operations in graph theory, including the most common types of graph products(cartesian, categorical, strong, and lexicographical), disjoint unions, joins, and complements \cite{Jaeger}. A motivating example of this concept is the sequence of complete graphs. For any graph $F$, $\hom(F,K_n)$ is the number of $n$-colorations of $F$, so $p_F(n)$ is the chromatic polynomial of $F$. Other strongly polynomial sequences include the Kneser graphs $\text{Kn}(n,k)$ with $k$ fixed, the Johnson graphs $\text{Jn}(n,k)$ with $k$ fixed, and the \emph{crown graphs} $\text{Cr}(n)$ which are obtained by removing a single perfect matching from the complete bipartite graphs $K_{n,n}$.  

 The authors of \cite{GNO} develop a general method to construct all the strongly polynomial graph families that are found in the literature, using concepts from model theory such as interpretations of relational structures. However, it is an open problem whether all strongly polynomial families can be constructed in this way.

\begin{prop}
\label{prop:injpol}
Let $\{G_n\}$ be a strongly polynomial family of graphs. Then for any fixed graph $F$, $\text{inj}(F,G_n)$ is a polynomial in $n$. The reverse implication is also true.
\end{prop}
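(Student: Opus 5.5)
Both directions follow from the two identities in the proof of Proposition~\ref{prop:injhomcount}, namely \eqref{eq:homtoinj} and \eqref{eq:injtohom}, applied with $G=G_n$ and $F$ fixed. The key point is that every sum involved is finite and its range does not depend on $n$: the partition lattice $\Pi(V(F))$ of a fixed graph $F$ is a fixed finite poset, and the M\"obius values $\mu(\theta)$ are fixed integers.

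For the forward direction, assume $\{G_n\}$ is strongly polynomial. For each $\theta\in\Pi(V(F))$ the quotient $F/\theta$ is again a fixed finite object. Here we must be slightly careful. If some block of $\theta$ contains an edge, then $F/\theta$ has a loop, and $\hom(F/\theta,G_n)=0$ for every $n$ because $G_n$ is loopless; the zero polynomial is fine. Otherwise $F/\theta$ is a simple graph (after merging parallel edges, which does not change homomorphism counts). In that case the strongly polynomial hypothesis gives a polynomial $p_{F/\theta}(n)$ with $\hom(F/\theta,G_n)=p_{F/\theta}(n)$ for all $n$. Substituting into \eqref{eq:injtohom} gives
\[
\text{inj}(F,G_n)=\sum_{\theta\in\Pi(V(F))}\mu(\theta)\,p_{F/\theta}(n),
\]
which is a finite $\mathbb{Z}$-linear combination of polynomials, and hence a polynomial in $n$, valid for every $n$.

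For the converse, assume $\text{inj}(F',G_n)$ is a polynomial in $n$ for every fixed graph $F'$. Apply \eqref{eq:homtoinj}: $\hom(F,G_n)=\sum_{\theta}\text{inj}(F/\theta,G_n)$. As before, the terms whose quotient has a loop vanish identically. Each remaining term is $\text{inj}$ of a fixed simple graph into $G_n$, which is polynomial by hypothesis. So $\hom(F,G_n)$ is a finite sum of polynomials, and this holds for every $F$; thus $\{G_n\}$ is strongly polynomial.

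There is no real obstacle here. The only points needing attention are bookkeeping: handling the quotients $F/\theta$ with loops or multiple edges, which either contribute zero or reduce to simple graphs with the same counts, and checking that the finiteness of $\Pi(V(F))$ makes the sums independent of $n$. This ensures the identities hold for all $n$ rather than only eventually.
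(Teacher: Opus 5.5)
Your proof is correct and follows the same route as the paper. The forward direction substitutes the polynomials $p_{F/\theta}(n)$ into the M\"obius-inverted identity \eqref{eq:injtohom}, and the converse reads off polynomiality from \eqref{eq:homtoinj}. Your extra bookkeeping about quotients with loops, which contribute zero, and parallel edges, which can be merged, makes explicit a point the paper handles only in a brief remark.
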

\begin{proof}
By Proposition \ref{prop:inj and hom}, $\text{inj}(F,G_n)$ can be expressed as a finite linear combination of terms of the form $\hom(F/\theta,G_n)$ for some partition $\theta$. By hypothesis, each of these terms is a polynomial, so $\text{inj}(F,G_n)$ is also a polynomial. The other direction follows from (\ref{eq:homtoinj}).
\end{proof}

\begin{thm} \label{thm:stronglypolynomial} Let $G_n$ be a strongly polynomial family of graphs. Then $\chi(G_n,x)$ and $\psi(G_n,x)$ are two-level polynomials of infinite depth.
\end{thm}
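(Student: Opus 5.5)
The plan is to express, for each fixed codegree $r$, the coefficient $c_r(n)$ as a finite $\Q$-linear combination of subgraph counts $\text{inj}(F,G_n)$ over graphs $F$ from a finite list depending only on $r$. Proposition~\ref{prop:injpol} then shows that each $c_r(n)$ is a polynomial in $n$ for all $n$. The degree function is $v(n)=\hom(K_1,G_n)$, which is a polynomial. Infinite depth means exactly that each $c_r$ agrees with one polynomial $\varphi_r$ for all $n$, including $n$ with $v(n)<r$. So the formulas must give $0$ automatically when $r>v(n)$, and this is what the counting argument provides.

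\textbf{Chromatic polynomial.} I will use Whitney's broken-circuit theorem. Fix a total order on $E(G)$; then
\[\chi(G,x)=\sum_{S}(-1)^{|S|}x^{v(G)-|S|},\]
where $S$ ranges over edge sets containing no broken circuit. Each such $S$ is a forest. Hence $c_r(n)=(-1)^r$ times the number of $r$-edge forests in $G_n$ containing no broken circuit. Broken circuits depend on the edge order, so I will instead use the order-free subgraph expansion
\[\chi(G,x)=\sum_{S\subseteq E}(-1)^{|S|}x^{k(S)},\]
where $k(S)$ is the number of connected components of the spanning subgraph $(V,S)$. A set $S$ contributes to codegree $r$ when $v-k(S)=r$, i.e.\ when its rank is $r$. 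The key step is to group such $S$ by the isomorphism type of the graph formed by the non-isolated vertices of $(V,S)$. Since the rank is $r$, this graph has no isolated vertices and at most $2r$ vertices, so there are only finitely many types $F$. The number of $S$ of a given type $F$ equals $\text{inj}(F,G_n)/|\mathrm{Aut}(F)|$, counting edge-subgraphs of $G_n$ isomorphic to $F$. Thus
\[c_r(n)=\sum_{F}(-1)^{|E(F)|}\,\frac{\text{inj}(F,G_n)}{|\mathrm{Aut}(F)|},\]
summed over isolated-vertex-free graphs $F$ of rank $r$. This is a polynomial in $n$ by Proposition~\ref{prop:injpol}. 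When $r>v(n)$ every $\text{inj}$ term vanishes, so the identity holds for all $n$.

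\textbf{Characteristic polynomial.} I will use the Sachs coefficient theorem:
\[c_r=\sum_{H}(-1)^{k(H)}2^{c(H)},\]
where $H$ ranges over subgraphs of $G$ on exactly $r$ vertices whose components are single edges ($K_2$) or cycles. Here $k(H)$ is the number of components and $c(H)$ the number of cycles. Again there are finitely many types $H$ on $r$ vertices, and the number of copies of each type is $\text{inj}(H,G_n)/|\mathrm{Aut}(H)|$. So $c_r(n)$ is a fixed linear combination of polynomials and vanishes when $r>v(n)$. An alternative route is Newton's identities as in Theorem~\ref{thm:stronglyregular}, since $\operatorname{tr}(A^j)=\hom(C_j,G_n)$. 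That route, however, requires dividing by $r$ and checking the case $r>v(n)$ separately, so the Sachs approach is cleaner.

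\textbf{Main obstacle.} The delicate point is making sure the identities hold for \emph{every} $n$ rather than eventually. In particular, the cases $v(n)<r$ and $v(n)=0$, where $G_n$ is empty, must come out correctly. This works because each formula is a sum over actual subgraph occurrences, which is $0$ when none exist, and $\text{inj}(F,G_n)$ equals its polynomial for all $n$. I also need the convention that the degree function is $v(n)$, and that $v(n)=\hom(K_1,G_n)$ is polynomial, as the definition requires.
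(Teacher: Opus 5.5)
Your proposal is correct and follows the paper's proof essentially step for step: you use the rank-based edge-subset expansion of $\chi$ and the Sachs expansion of $\psi$, write each fixed-codegree coefficient as a finite combination of $\text{inj}(F,G_n)/|\mathrm{Aut}(F)|$, and finish with Proposition~\ref{prop:injpol}. Your explicit bound of at most $2r$ vertices for the finitely many types, and your check that the identities hold for every $n$ (including $v(n)<r$), make precise two points the paper leaves implicit; the broken-circuit detour at the start is unnecessary.
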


\begin{proof} To prove that $\chi(G_n,x)$ is a two-level polynomial, we use the following well-known expansion in terms of subsets of edges \cite[Theorem 10.4]{Biggs}:
\[ \chi(G_n,x)=\sum_{S\subset E(G_n)}(-1)^{|S|}x^{v(n)-R(S)} \]
where $v(n)$ is the number of vertices of $G_n$ and, for a subset $S$ of edges, $R(S)$ is the number of vertices of the subgraph induced by $S$ minus the number of connected components. Because $\hom(K_1,G_n)$ is a polynomial, it follows that $v(n)$ is a polynomial in $n$. Now, for a fixed codegree $r$, there are only finitely many subsets of edges $S$ such that $R(S)=r$, up to isomorphism. Let $H_1,...H_m$ be these isomorphism classes of graphs induced by such subsets of edges. We want to count the number of copies, not necessarily induced, of these graphs in $G_n$. This number is exactly $\text{inj}(H_i,G_n)$ divided by the size of the automorphism group of $H_i$. Hence, the coefficient of the fixed codegree $r$ is 
$$\varphi_r(n)=\sum_{i=1}^{m}(-1)^{E(H_i)}\frac{\text{inj}(H_i,G_n)}{|\text{Aut}(H_i)|}$$
By Proposition \ref{prop:injpol}, we know that every term $\text{inj}(H_i,G_n)$ is a polynomial, so $\varphi_r(n)$ is also a polynomial, showing that $\chi(G_n,x)$  is a two-level polynomial of infinite depth.

To prove the same property for $\psi(G_n,x)$, we use a similar expansion. By \cite[Proposition 7.3]{Biggs}, 
\[\psi(G_n,x)=\sum\limits_{r=0}^{\infty}\sum\limits_{v(\Gamma)=r}(-1)^{r(\Gamma)}(-2)^{s(\Gamma)} x^{v(n)-r}\]
where all the subgraphs $\Gamma$ have connected components which are single edges or cycles, and $r(\Gamma)$ and $s(\Gamma)$ represent the numbers of connected components of the first and second types, respectively. For a fixed $r$, there are only finitely many isomorphism classes of graphs of this type, so, as in the chromatic case, the number of copies, not necessarily induced, of these graphs in $G_n$ is a polynomial, and this proves that the characteristic polynomial is also a two-level polynomial with infinite depth.
\end{proof}

\begin{rem} A sequence of graphs can have a chromatic polynomial which is a two-level polynomial with infinite depth without being strongly polynomial. A simple example is given by the path graph $L_q$, whose chromatic polynomial is $n(n-1)^{q-1}$, just as for all trees on $q$ vertices. By Proposition \ref{prop:paths-polynomial}, the sequence $\{L_q\}$ is not strongly polynomial. 

The path graph also provides an example of a graph sequence whose chromatic polynomial is a two-level polynomial of infinite depth, although its characteristic polynomial has only finite depth. Let $\psi(L_q,n)$ be this characteristic polynomial. Because the spectrum of this graph is $\{2\cos(\dfrac{k\pi}{q+1})$, $1\leq k\leq q\}$, it follows that $\psi_{L_q}(n)=U_q(n/2)$, where $U_q$ is the $q$-th Chebyshev polynomial of the second kind \cite[p.~11]{Biggs}. Thus
\[ \psi_{L_q}(n)=\sum\limits_{r=0}^{\lfloor q/2 \rfloor}(-1)^{r}\binom{q-r}{r}n^{q-2r} \]

This is a two-level polynomial of finite depth, because $\varphi_r(q)=0$ when $r$ is odd, and if $r$ is even, $\varphi_{r}(q)=(-1)^{r/2}\binom{q-r/2}{r/2}$ is $0$ when $q<r$, and agrees with the polynomial \[(-1)^{r/2}\dfrac{(q-\frac{r}{2})(q-\frac{r}{2}-1)\cdots(q-r+1)}{\frac{r}{2}!}\] only for $q\geq r/2$. Hence the depth of $\psi_{L_q}(n)$ is $2q$.
\end{rem}

Since the proper $n$-colorings of a graph $G$ are exactly the homomorphisms from $G$ to the complete graph $K_n$, Theorem \ref{thm:stronglypolynomial} states that the number of homomorphisms from \emph{any} strongly polynomial sequence $\{G_q\}$ to the \emph{particular} strongly polynomial sequence $\{K_n\}$ is given by a two-level polynomial of infinite depth. This suggests several questions. Most optimistically, could it be the case that for \emph{any pair} of strongly polynomial sequences $\{G_q\}$ and $\{H_n\}$, the number of homomorphisms is given by a two-level polynomial of infinite depth? The answer turns out to be no. 

\begin{prop} \label{prop:counterexamples}
  \begin{enumerate}
  \item Let $S_n$ be the star graph on $n+1$ vertices (that is, a central vertex surrounded by $n$ leaves). Then $\hom(S_q,S_n)=n^q+n$, which is a two-level polynomial of depth $q-2$ as in Example \ref{ex:finitedepth}.
  \item Furthermore, $\hom(K_q,K_{n^2} \sqcup K_n)=(n^2)_q +(n)_q$ for $q\geq 1$, which is a two-level polynomial of depth $q-1$. 
  \end{enumerate}
\end{prop}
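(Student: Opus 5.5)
The plan is to compute both homomorphism counts directly from the structure of the target graphs. Then I will read off the coefficients by codegree and compare them with the polynomials forced by large $q$. Throughout, $q\geq 1$.

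\emph{Part (1).} Let $c$ be the center of $S_q$ and $c'$ the center of $S_n$. I split homomorphisms $h\colon S_q\to S_n$ according to $h(c)$.
\begin{itemize}
\item If $h(c)=c'$, each of the $q$ leaves may go to any neighbor of $c'$, namely any of the $n$ leaves. This gives $n^q$ homomorphisms.
\item If $h(c)$ is one of the $n$ leaves, every leaf of $S_q$ must map to the unique neighbor $c'$. This gives $n$ homomorphisms.
\end{itemize}
Hence $\hom(S_q,S_n)=n^q+n$, which is exactly the family of Example \ref{ex:finitedepth}.

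The depth needs one argument, which I will reuse in part (2): the candidate polynomials $\varphi_r$ are forced. For fixed $r$, the coefficient $c_r(q)$ equals $0$ for all $q>r+1$, so any admissible $\varphi_r$ vanishes at infinitely many points and is therefore the zero polynomial. But $c_r(r+1)=1$, so the identity $c_r(q)=\varphi_r(q)$ fails at codegree $r=q-1$. It holds for all $r\leq q-2$, so the depth is exactly $q-2$.

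\emph{Part (2).} Since $K_q$ is connected, every homomorphism lands in a single component of $K_{n^2}\sqcup K_n$. Moreover $\hom(K_q,K_m)=(m)_q$, because a homomorphism from $K_q$ to $K_m$ is an injection of the vertex set. This gives
\[
\hom(K_q,K_{n^2}\sqcup K_n)=(n^2)_q+(n)_q.
\]
By Proposition \ref{prop:roots polynomial} with $P(i)=-i$, we can write $(m)_q=\sum_{j}\sigma_j(q)m^{q-j}$. Here each $\sigma_j(q)$ equals the polynomial $(-1)^j\sum_{0\le i_1<\dots<i_j<q}i_1\cdots i_j$ for every $q$, including $q<j$, where both sides are $0$.

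Substituting $m=n^2$, the degree-$2q$ polynomial $(n^2)_q$ has codegree-$r$ coefficient $\varphi_r(q)$, where $\varphi_r=\sigma_{r/2}$ for $r$ even and $\varphi_r=0$ for $r$ odd. The summand $(n)_q$ has degree $q$, so it only affects codegrees $r\ge q$. Therefore $c_r(q)=\varphi_r(q)$ for all $r\le q-1$, and the depth is at least $q-1$.

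For the upper bound I use the same forcing argument as in part (1). Fix $r$. For every $q>r$, the coefficient $c_r(q)$ equals $\varphi_r(q)$, so any admissible polynomial must coincide with $\varphi_r$. At $q=r$, however, $(n)_q$ contributes its leading coefficient $1$ to $n^{q}$, which sits at codegree $q$. Hence $c_q(q)=\varphi_q(q)+1\neq\varphi_q(q)$, and the depth is exactly $q-1$.

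The only delicate point is this "forced polynomial" argument. Depth is defined through the existence of some family $\{\varphi_r\}$, so I must rule out every choice, not just the natural one. Agreement with $\varphi_r$ on infinitely many $q$ pins each $\varphi_r$ down uniquely, and that settles it.
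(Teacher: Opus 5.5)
Your proof is correct and follows the paper's argument: a case split on the image of the center in (1), and in (2) the observation that a clique lands in one component, so the count agrees with $(n^2)_q$ at every codegree below $q$. Your forcing argument makes explicit the maximality of the depth, which the paper leaves implicit. One small slip: in part (1) the claim that $c_r(q)=0$ for all $q>r+1$ fails at $r=0$, where $c_0(q)=1$ for $q\ge 2$ forces $\varphi_0=1$ (and $c_0(1)=2$), so that step should be stated for $r\ge 1$; the conclusion that the depth is exactly $q-2$ is unaffected.
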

  \begin{proof}
  \begin{enumerate}
      \item[(1)] Let $s$ and $s'$ be the central vertices of $S_q$ and $S_n$, respectively. A graph homomorphism $f$ from $S_q$ to $S_n$ may send $s$ to $s'$, and in this case each of the $q$ leaves of $S_q$ must be sent to one of the $n$ leaves of $S_n$, which gives $n^q$ options. If $s$ is sent to any of the $n$ leaves, then the other leaves must be sent to $s'$, giving another $n$ options.

      \item[(2)] Any $q$-clique in $K_{n^2} \sqcup K_n$ must be contained in $K_{n^2}$ or in $K_n$, so $\hom(K_q,K_{n^2} \sqcup K_n)=\hom(K_q,K_{n^2})+\hom(K_q,K_n)=(n^2)_q +(n)_q$. For each $q$ this is a polynomial in $n$ of degree $2q$ and the terms agree with the infinite depth two-level polynomial $(n^2)_q$ down to degree $q+1$ but no further. Thus the depth of this sequence of polynomials is $q-1$.  
  \end{enumerate}
  \end{proof}
  
  By Theorem \ref{thm:stronglypolynomial}, we have that $\hom(G_q, K_n)$ is a two-level polynomial of infinite depth. Proposition \ref{prop:counterexamples} (2) implies that this conclusion does not continue to hold if we exchange the source (i.e., a general strongly polynomial family) with the target (i.e., the family of complete graphs). 

  However, we mention one case for which homomorphisms \emph{from} a particular strongly polynomial family \emph{to} any strongly polynomial family do yield infinite depth, at least up to global multiplication by an appropriate function of $q$.  

\begin{prop}
  Let $M_q$ be a matching of size $q$ (that is, the union of $q$ disjoint edges), and let $G_n$ be any strongly polynomial family of graphs. Then $\hom(M_q,G_n)$ is a two-level polynomial with infinite depth multiplied by $c^q$, where $c$ is a fixed constant.
\end{prop}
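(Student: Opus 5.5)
Since $M_q$ is the disjoint union of $q$ copies of $K_2$, and homomorphism counts are multiplicative over disjoint unions of the source, we have
\[
\hom(M_q,G_n)=\hom(K_2,G_n)^q = \bigl(2e(n)\bigr)^q,
\]
where $e(n)$ is the number of edges of $G_n$. Because $\{G_n\}$ is strongly polynomial, $\hom(K_2,G_n)=p(n)$ is a fixed polynomial in $n$, independent of $q$. The plan is therefore to reduce the statement to the closure property in Proposition~\ref{prop:closure}(3).

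\textbf{Step 1: normalize the base.} Write $p(n)=c\,n^d+(\text{lower terms})$. Here $d=\deg p$ and $c$ is its leading coefficient, a nonzero rational. The degenerate case $p\equiv 0$, where $G_n$ has no edges, gives the zero sequence and is handled trivially. By Proposition~\ref{prop:closure}(1), the constant-in-$q$ family $f_q(n)=p(n)$ is a two-level polynomial of constant degree function $d$ and infinite depth, with constant leading coefficient $c$.

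\textbf{Step 2: take powers.} Apply Proposition~\ref{prop:closure}(3) with $h(q)=q$. This shows that $p(n)^q/c^q$ is a monic two-level (quasi-)polynomial of degree function $dq$ and infinite depth. Hence
\[
\hom(M_q,G_n)=c^q\cdot\frac{p(n)^q}{c^q}
\]
has the required form.

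\textbf{Step 3: rule out quasi-polynomial behaviour.} Proposition~\ref{prop:closure}(3) only guarantees a quasi-polynomial, so I would check directly that the codegree-$r$ coefficients are genuine polynomials in $q$. Write $p(n)/c=n^d(1+u)$, where $u=\sum_{i\ge1}a_in^{-i}$ with constant $a_i$. The binomial series gives
\[
(1+u)^q=\sum_{j\ge 0}\binom{q}{j}u^j .
\]
The coefficient of $n^{dq-r}$ receives contributions only from $j\le r$. Each such contribution is $\binom{q}{j}$ times a constant depending only on $r$, $j$ and the $a_i$. So it is a polynomial in $q$, valid for every $q\ge 0$, since $\binom{q}{j}$ agrees with its polynomial even when $j>q$, as in the example $(n+1)^q$.

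This yields infinite depth. The only mild obstacle is this bookkeeping in Step 3, confirming the identity holds for all $q$ rather than eventually. The same expansion also handles the case where $p$ is constant, with degree $0$.
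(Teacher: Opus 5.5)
Your proposal is correct and is essentially the paper's proof: both write $\hom(M_q,G_n)=\hom(K_2,G_n)^q=(2|E(G_n)|)^q$, pull out $c^q$ with $c$ the leading coefficient of $2|E(G_n)|$, and use that powers of a fixed monic polynomial form an infinite-depth two-level polynomial. The paper simply cites Bogart--Woods for that last fact, whereas your Step 3 checks it directly via the binomial expansion; this also shows the coefficients are genuine polynomials in $q$ (not quasi-polynomials) valid for all $q\ge 0$.
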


\begin{proof}
   Every edge of a matching must be sent to an edge, and the order of the two vertices in this edge matters, so $\hom(M_q,G_n)=(2|E(G_n)|)^q=2^q|E(G_n)|^q$, where $|E(G_n)|$ is the number of edges of $G_n$, which is a polynomial in $n$ because $G_n$ is strongly polynomial. If the leading coefficient of $|E(G_n)|$ is $k$, so that $|E(G_n)|=kf(n)$ for a monic polynomial $f(n)$, then $\hom(M_q,G_n)=(2k)^qf(n)^q$ and, by \cite[Proposition 18]{BW}, $f(n)^q$ is a two-level polynomial of infinite depth.
\end{proof}

We now return to the situation that the source is an arbitrary strongly polynomial family, and consider possible targets beyond the family of complete graphs. 

\begin{defn}
Let $H_n$ be a family of graphs. We say $H_n$ is an (infinite-depth) \emph{two-level target sequence} if, for every strongly polynomial family of graphs $G_q$, we have that $\hom(G_q,H_n)$ is a two-level polynomial of infinite depth.
\end{defn}

By taking a fixed graph in the domain, it is easy to see that any two-level target sequence must be strongly polynomial. Theorem \ref{thm:stronglypolynomial} shows that $K_n$ is a two-level target sequence. The next lemma establishes that this property is preserved under categorical products of graphs.

\begin{lem}
 Let $H_n$ and $J_n$ be two two-level target sequences. Then $H_n\times J_n$ is also a two-level target sequence, where the product is the categorical product.
\end{lem}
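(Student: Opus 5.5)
The plan is to use the universal property of the categorical product. For any graphs $G,H,J$, a map $V(G)\to V(H)\times V(J)$ is a homomorphism into $H\times J$ exactly when both coordinate projections are homomorphisms. Indeed, $(h,j)$ and $(h',j')$ are adjacent in $H\times J$ precisely when $hh'\in E(H)$ and $jj'\in E(J)$. Hence
\[
\hom(G_q,H_n\times J_n)=\hom(G_q,H_n)\cdot\hom(G_q,J_n)
\]
for every $q$ and $n$.

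Now fix a strongly polynomial family $G_q$. By hypothesis, $\hom(G_q,H_n)$ and $\hom(G_q,J_n)$ are two-level polynomials (in $n$, indexed by $q$) of infinite depth. Call their degree functions $g(q)$ and $g'(q)$. Proposition~\ref{prop:closure}(2) then shows that the product is a two-level polynomial with degree function $g(q)+g'(q)$ and depth function $\min(\infty,\infty)=\infty$. Since $G_q$ was arbitrary, $H_n\times J_n$ is a two-level target sequence.

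The only point needing care is how the two factor families are indexed. Each is defined on the set $S$ where its degree function is nonnegative, and is set to $0$ elsewhere by Definition~\ref{def:two-level}(b). I would check that the product convention matches on the indices where one factor vanishes, noting that there both sides are $0$.

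I also need the degree of the product to equal the sum of the degrees. This holds because the leading coefficients are eventually given by nonzero polynomials $\varphi_0$, so the product's leading coefficient $\varphi_0\varphi_0'$ is nonzero for the relevant $q$. This is already built into the closure property of \cite{BW}, so I expect no real obstacle. The product formula for homomorphism counts is the key step.
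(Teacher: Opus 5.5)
Your proposal is correct and follows the paper's proof: the paper also writes $\hom(G_q,H_n\times J_n)=\hom(G_q,H_n)\hom(G_q,J_n)$ and then invokes the product closure property (Proposition~\ref{prop:closure}(2)) to conclude infinite depth. Your justification of the product formula via the universal property of the categorical product, and your remarks on degree and indexing, make explicit what the paper leaves implicit.
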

\begin{proof}
If $G_q$ is a strongly polynomial sequence of graphs, then 
\[\hom(G_q,H_n\times J_n)=\hom(G_q,H_n)\hom(G_q,J_n)\] 
and this product is a two-level target sequence by Proposition \ref{prop:closure}.
\end{proof}

\begin{cor}
 Let $K_n^k$ be the categorical product of $k$ copies of $K_n$ (that is, two $k$-tuples are adjacent if they differ in every coordinate). Then $K_n^k$ is a two-level target sequence.
\end{cor}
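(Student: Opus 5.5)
The plan is an induction on $k$ that combines the base case with the preceding lemma. For $k=1$, the sequence $K_n^1=K_n$ is a two-level target sequence by Theorem \ref{thm:stronglypolynomial}. Indeed, for any strongly polynomial $G_q$ we have $\hom(G_q,K_n)=\chi(G_q,n)$, and that theorem says this is a two-level polynomial of infinite depth. For the inductive step, I will use that the categorical product is associative up to isomorphism. This gives $K_n^{k}\cong K_n^{k-1}\times K_n$, and $\hom(G_q,\cdot)$ depends only on the isomorphism type of the target. If $K_n^{k-1}$ is a two-level target sequence, the preceding lemma applied with $H_n=K_n^{k-1}$ and $J_n=K_n$ then shows that $K_n^k$ is one as well.

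I will also check that the parenthetical description in the statement matches the categorical product. In $H\times J$, two pairs $(h,j)$ and $(h',j')$ are adjacent exactly when $hh'\in E(H)$ and $jj'\in E(J)$. Iterating this, two $k$-tuples in $K_n^k$ are adjacent exactly when each coordinate pair is adjacent in $K_n$, which means the tuples differ in every coordinate.

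The only point that needs care is the bookkeeping hidden in the lemma, which rests on Proposition \ref{prop:closure}(2). The degree function of the product is $g+g'$, where $g(q)=g'(q)=v(q)$ is the number of vertices of $G_q$ and is a polynomial in $q$. The depth function is $\min(\infty,\infty)=\infty$. The leading coefficients are all $1$, so nothing degenerates. Unwinding the induction, $\hom(G_q,K_n^k)=\chi(G_q,n)^k$ is a two-level polynomial of degree $k\,v(q)$ and infinite depth. Equivalently, one could apply Proposition \ref{prop:closure}(2) $k-1$ times directly to this product. I do not expect any step to be a real obstacle, since the statement follows immediately from the lemma.
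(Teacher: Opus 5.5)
Your proposal is correct and is essentially the paper's own argument. The paper gives no separate proof: the corollary follows from $K_n$ being a two-level target sequence by Theorem~\ref{thm:stronglypolynomial}, together with the preceding categorical-product lemma applied inductively, exactly as you do.
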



Going beyond categorial products of complete graphs, we next consider Kneser graphs. First, notice that $\hom(K_q,\Kn(n,k))$, that is, the number of $q$-cliques in $\Kn(n,k)$ multiplied by $q!$, is exactly $\binom{n}{k}\binom{n-k}{k}\cdots \binom{n-(q-1)k}{k}$, which is a two-level polynomial after factoring out the term $(k!)^q$ in the denominator. To avoid this term, we define an auxiliary graph $\overline{\Kn}(n,k)$ to be the graph whose vertices are ordered $k$-tuples of distinct elements of $[n]$, and whose edges are given by pairs of tuples whose underlying sets are disjoint. Equivalently, we obtain $\overline{\Kn}(n,k)$ by starting with $\Kn(n,k)$ and copying each vertex $k!$ times. 

\begin{thm} \label{thm:Knesertarget}
For $k$ fixed, the sequence $\{\overline{\Kn}(n,k)\}$ is a two-level target sequence. 
\end{thm}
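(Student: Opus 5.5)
The plan is to reduce the statement to Theorem~\ref{thm:stronglypolynomial} by showing that homomorphisms into $\overline{\Kn}(n,k)$ are proper colorings of a blown-up source graph. For a graph $G$, let $G[K_k]$ denote the lexicographic product. Its vertex set is $V(G)\times[k]$, and $(u,i)\sim(w,j)$ holds if either $u=w$ and $i\neq j$, or $uw\in E(G)$. A homomorphism $G\to\overline{\Kn}(n,k)$ assigns to each vertex $u$ an ordered $k$-tuple $(x_{u,1},\dots,x_{u,k})$ of distinct elements of $[n]$, such that adjacent vertices receive tuples with disjoint underlying sets. Reading $x_{u,i}$ as the color of $(u,i)$, these are exactly the two conditions for a proper $n$-coloring of $G[K_k]$: the $k$ copies of each vertex get distinct colors, and copies of adjacent vertices get distinct colors. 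This gives a bijection, so
\[
\hom(G_q,\overline{\Kn}(n,k))=\chi(G_q[K_k],n).
\]
It then suffices to prove that $\{G_q[K_k]\}_q$ is strongly polynomial, because Theorem~\ref{thm:stronglypolynomial} then gives infinite depth, with degree function $k\,v(q)$, which is polynomial.

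The constant sequence $\{K_k\}$ is trivially strongly polynomial, and de la Harpe and Jaeger~\cite{Jaeger} show that lexicographic products preserve strong polynomiality. That already finishes the argument. To be self-contained, I would also verify this special case directly through Proposition~\ref{prop:injpol}, by showing that $\text{inj}(F,G_q[K_k])$ is polynomial in $q$ for every fixed $F$.

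Let $h$ be an injective homomorphism $F\to G_q[K_k]$. Composing with the projection to $V(G_q)$ gives a partition $\theta$ of $V(F)$ into fibers, and each block is a clique of $F$ of size at most $k$, since a fiber induces $K_k$ and $h$ is injective. Any edge of $F$ joining two different blocks must map to an edge of $G_q$. Edges of $G_q$ between two fiber images that are not images of $F$-edges are irrelevant, because a homomorphism only has to preserve edges.

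Let $F_\theta$ be the graph on the blocks of $\theta$, with two blocks adjacent when some $F$-edge joins them; this is $F/\theta$ with the loops deleted. The data of $h$ is then an injective homomorphism $F_\theta\to G_q$, together with, for each block $B$, an injection $B\to[k]$. This gives
\[
\text{inj}(F,G_q[K_k])=\sum_{\theta}\Bigl(\prod_{B\in\theta}(k)_{|B|}\Bigr)\,\text{inj}(F_\theta,G_q),
\]
where the sum runs over the finitely many partitions $\theta$ of $V(F)$ into cliques. Each $\text{inj}(F_\theta,G_q)$ is polynomial in $q$ by Proposition~\ref{prop:injpol}, so the left side is polynomial, and the converse direction of Proposition~\ref{prop:injpol} shows that $\{G_q[K_k]\}$ is strongly polynomial.

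The main obstacle is the bookkeeping in this count. I need to check carefully that the decomposition of $h$ into the pair (injective homomorphism of $F_\theta$, injections of the blocks) is bijective. This requires confirming that the block partition is recovered uniquely from $h$, and that no adjacency condition is needed for non-edges, since homomorphisms need not be induced. Once that is settled, the original bijection with colorings and Theorem~\ref{thm:stronglypolynomial} complete the proof.
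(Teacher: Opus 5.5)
Your proof is correct and follows the paper's route. Lemma~\ref{lem:toKneser} is exactly your bijection with proper $n$-colorings of $G_q\cdot K_k$. The paper then cites \cite[Corollary 4.6]{GNO} for strong polynomiality of $\{G_q\cdot K_k\}$, where you cite \cite{Jaeger}, and concludes with Theorem~\ref{thm:stronglypolynomial}.

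There is one error in your optional self-contained check: the fibers of $h$ need not be cliques of $F$. A homomorphism may send non-adjacent vertices of $F$ into the same fiber. For example, with $F=\overline{K_2}$, $G_q=K_q$ and $k=2$, your formula gives $4q(q-1)$ instead of $\text{inj}(F,K_{2q})=2q(2q-1)$. The sum should therefore run over all partitions $\theta$ of $V(F)$; the factor $(k)_{|B|}=0$ already discards blocks with more than $k$ elements. With that correction your bijection and the polynomiality conclusion go through unchanged.
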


\begin{lem}
If $F$ is a graph with $q$ vertices, then $\hom(F,Kn(n,k))=(k!)^q\hom(F,\overline{Kn}(n,k))$.
\end{lem}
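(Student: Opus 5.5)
The plan is a fiber-counting argument over the natural projection between the two graphs. Write $\pi:\overline{\Kn}(n,k)\to \Kn(n,k)$ for the map sending an ordered $k$-tuple of distinct elements of $[n]$ to its underlying $k$-set. Then:
\begin{enumerate}
\item Each fiber of $\pi$ has exactly $k!$ elements.
\item Two tuples are adjacent in $\overline{\Kn}(n,k)$ if and only if their images under $\pi$ are adjacent in $\Kn(n,k)$, since both conditions say the underlying sets are disjoint.
\end{enumerate}
In other words, $\overline{\Kn}(n,k)$ is the lexicographic-type blow-up of $\Kn(n,k)$ in which every vertex is replaced by an independent set of size $k!$, exactly as described before Theorem \ref{thm:Knesertarget}. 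The lemma should therefore follow from comparing homomorphisms from $F$ into these two graphs through $\pi$.

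Concretely, let $F$ have vertex set $\{v_1,\dots,v_q\}$. The key steps, in order, are:
\begin{enumerate}
\item Composition with $\pi$ gives a map $\textbf{Hom}(F,\overline{\Kn}(n,k))\to\textbf{Hom}(F,\Kn(n,k))$, $\zeta\mapsto\pi\circ\zeta$. This is well defined because $\pi$ preserves adjacency, by the second property above.
\item This map is surjective, and every fiber has size exactly $(k!)^q$. Given $\varphi\in\textbf{Hom}(F,\Kn(n,k))$, a lift $\zeta$ is determined by independently choosing, for each $v_i$, one of the $k!$ orderings of the set $\varphi(v_i)$.
\item Every such lift is automatically a homomorphism. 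For any edge $v_iv_j$ of $F$, the sets $\varphi(v_i)$ and $\varphi(v_j)$ are disjoint, so any orderings of them are adjacent. Note that no loops arise, since $F$ is simple and adjacent images are disjoint, hence distinct.
\item Summing over $\varphi$ relates $\hom(F,\Kn(n,k))$ and $\hom(F,\overline{\Kn}(n,k))$ by the factor $(k!)^q$.
\end{enumerate}

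The one point I expect to require care is the direction of the factor $(k!)^q$, which is fixed by which graph has the larger fibers. The lifting argument puts $(k!)^q$ times as many homomorphisms on the side of $\overline{\Kn}(n,k)$. To pin down the orientation before writing the final sentence, I would check the edgeless graph $F=\overline{K_q}$. There $\hom(F,\Kn(n,k))=\binom nk^q$ while $\hom(F,\overline{\Kn}(n,k))=(n)_k^q=(k!)^q\binom nk^q$.

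This check shows that the identity produced by the argument is $(k!)^q\hom(F,\Kn(n,k))=\hom(F,\overline{\Kn}(n,k))$. The displayed equation in the lemma carries the factor on the opposite side, so as literally printed it fails already for $F=\overline{K_q}$ (for $q\ge1$, $k\ge2$, $n\ge k$). I would therefore present the bijection of steps 1--3 and state the lemma in the form it actually proves, with the factor $(k!)^q$ attached to $\hom(F,\Kn(n,k))$. This is the form needed for Theorem \ref{thm:Knesertarget}, because it expresses $\hom(G_q,\overline{\Kn}(n,k))$ as $(k!)^{|V(G_q)|}\hom(G_q,\Kn(n,k))$, which removes the $(k!)^q$ denominators noted earlier.
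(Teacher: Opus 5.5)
Your proposal is correct and uses the same argument as the paper: every homomorphism $F\to\Kn(n,k)$ lifts to $\overline{\Kn}(n,k)$ in exactly $(k!)^q$ ways, by independently choosing an ordering of each image set. You are also right about the orientation: this lifting argument, including the paper's own one-line proof, proves $\hom(F,\overline{\Kn}(n,k))=(k!)^q\hom(F,\Kn(n,k))$, so the displayed identity has the factor on the wrong side, as your edgeless-$F$ check confirms (already $F=K_1$ gives $(n)_k$ versus $\binom nk$).
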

\begin{proof} Every homomorphism $\Phi$ from $F$ to $\Kn(n,k)$ can be extended to $\overline{Kn(n,k)}$ by selecting, for every vertex $v$ of $F$, any of the $k!$ permutations of the image $\Phi(v)$.
\end{proof} 

\begin{lem}\label{lem:toKneser}
 Let $G=(V,E)$ be a graph, and let $G \cdot K_k$ be the lexicographic product with the complete graph $K_k$: that is, the vertex set is $F\times [k]$ and two vertices $(v,i),(w,j)$ are connected by a edge if $v=w$ and $i\neq j$, or $(v,w)\in E$. Then
\[ \hom(G,\overline{Kn}(n,k))=\hom(G\cdot K_k,K_n)=\chi(G\cdot K_k,n).\]
\end{lem}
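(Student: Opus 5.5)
The plan is to construct an explicit bijection between $\textbf{Hom}(G,\overline{\Kn}(n,k))$ and the set of proper $n$-colorings of $G\cdot K_k$, that is, $\textbf{Hom}(G\cdot K_k,K_n)$. The second equality $\hom(G\cdot K_k,K_n)=\chi(G\cdot K_k,n)$ then holds by definition: proper $n$-colorings are exactly homomorphisms to $K_n$. (Here the vertex set of the lexicographic product should read $V\times[k]$.)

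\textbf{The forward map.} Take a homomorphism $\Phi\colon G\to\overline{\Kn}(n,k)$. For each $v\in V$ the image $\Phi(v)$ is an ordered $k$-tuple $(\Phi(v)_1,\ldots,\Phi(v)_k)$ of distinct elements of $[n]$. Define $c_\Phi\colon V\times[k]\to[n]$ by
\[
c_\Phi(v,i)=\Phi(v)_i .
\]

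\textbf{$c_\Phi$ is a proper coloring.} Edges of $G\cdot K_k$ come in two types.
\begin{enumerate}
\item[(a)] If $v=w$ and $i\neq j$, then $c_\Phi(v,i)\neq c_\Phi(v,j)$ because the entries of the tuple $\Phi(v)$ are distinct.
\item[(b)] If $(v,w)\in E$, then $\Phi(v)$ and $\Phi(w)$ are adjacent in $\overline{\Kn}(n,k)$. Their underlying sets are therefore disjoint, so $\Phi(v)_i\neq\Phi(w)_j$ for all $i,j$.
\end{enumerate}
Note that $G$ has no loops, so the case $v=w$ in (b) never arises.

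\textbf{The inverse map.} Start from a proper coloring $c$ of $G\cdot K_k$ and set
\[
\Phi_c(v)=(c(v,1),\ldots,c(v,k)).
\]
Type (a) edges force the entries of $\Phi_c(v)$ to be distinct, so $\Phi_c(v)$ is a vertex of $\overline{\Kn}(n,k)$. Type (b) edges force the underlying sets of $\Phi_c(v)$ and $\Phi_c(w)$ to be disjoint whenever $(v,w)\in E$, so $\Phi_c$ is a homomorphism. The two constructions are visibly inverse to each other, which gives the bijection and hence the equality of counts.

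\textbf{Expected difficulty.} There is no real obstacle. The one point needing care is that the correspondence uses \emph{ordered} tuples, matched to the labels $1,\ldots,k$ of the copies of each vertex. This is precisely why the statement is phrased for $\overline{\Kn}$ rather than $\Kn$: with unordered sets, each set would correspond to $k!$ colorings.
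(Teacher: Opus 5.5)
Your proposal is correct and follows the paper's proof essentially verbatim. You use the same coordinate-wise map $c_\Phi(v,i)=\Phi(v)_i$, the same check of the two edge types, and the same inverse $v\mapsto(c(v,1),\ldots,c(v,k))$; you also state explicitly that the two maps are mutually inverse, which the paper leaves implicit, and you correctly note the $F\times[k]$ versus $V\times[k]$ typo.
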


\begin{proof} Let $\psi\in \textbf{Hom}(G,\overline{Kn}(n,k))$. and define a function $c_{\psi}:G\cdot K_k\rightarrow K_n$ by $c_{\psi}(g,i)=\psi_i(g)$, where $\psi_i$ is the $i$-th coordinate of the function $\psi$. Then $c_{\psi}$ is a valid graph homomorphism, since $c_{\psi}(g,i)$ and $c_{\psi}(g,j)$ are different when $i\neq j$ and $c_{\psi}(v,i)$, $c_{\psi}(w,j)$ are also different when $(v,w)\in E$. 

For the other direction, if $c$ is a proper coloring of $G\cdot K_k$, then $\hat{c}:G\rightarrow \overline{Kn}(n,k)$ defined by $\hat{c}(v)=(c(v,1),c(v,2),...c(v,k))$ is also a valid graph homomorphism, since $c(v,i)\neq c(v,j)$ when $i\neq j$ and if $(v,w)\in E$, then all the coordinates in $\hat{c}(v)$ and $\hat{c}(w)$ are distinct.
\end{proof}

\begin{proof}[Proof of Theorem \ref{thm:Knesertarget}]
It follows from \cite[Corollary 4.6]{GNO} that $G_n\cdot K_k$ is itself a strongly polynomial sequence of graphs. Then, if $G_q$ is a strongly polynomial sequence of graphs, $\hom(G_q,\overline{Kn}(n,k))$ is equal to $\hom(G_q\cdot K_k,K_n)$ by Lemma \ref{lem:toKneser}, which is a two-level polynomial of infinite depth by Theorem \ref{thm:stronglypolynomial}.
\end{proof}

\begin{prop}
If $H_n$ is a two-level target sequence, then for every fixed graph $F$, $\hom(F,H_n)$ must be a \textbf{monic} polynomial or $0$.
\end{prop}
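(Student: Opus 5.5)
Fix a graph $F$ and write $p_F(n)=\hom(F,H_n)$. We already know $p_F$ is a polynomial in $n$, since a two-level target sequence is strongly polynomial (use the constant source sequence $G_q=F$). Suppose $p_F\neq 0$, and let $c$ be its leading coefficient and $d$ its degree. The plan is to feed the target a strongly polynomial source sequence whose homomorphism counts are powers of $p_F$. Then the two-level requirement will force $c=1$.

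The natural choice is the disjoint union $G_q=F^{\sqcup q}$ of $q$ copies of $F$. Homomorphism counts are multiplicative over disjoint unions of the source, so
\[
\hom(G_q,H_n)=p_F(n)^q .
\]
We must check that $\{G_q\}$ is strongly polynomial. For a connected test graph $T$, a homomorphism $T\to F^{\sqcup q}$ lands in a single copy, so $\hom(T,G_q)=q\,\hom(T,F)$. For general $T$ we take the product over its components, giving a polynomial in $q$. (One could also cite closure of strongly polynomial sequences under disjoint unions from \cite{Jaeger}.)

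By hypothesis, $\{p_F(n)^q\}_q$ is then a two-level polynomial of infinite depth, with degree function $dq$. In particular its codegree-$0$ coefficient must agree with a single polynomial $\varphi_0(q)$ for every $q$ with $q\ge 0$ and $dq\ge0$. But that coefficient is exactly $c^q$.

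The key step is that $q\mapsto c^q$, for a nonzero rational $c$, agrees with a polynomial on all sufficiently large $q$ only if $c=1$. I would prove this via finite differences. If $c^q=\varphi_0(q)$ for all large $q$, then applying the difference operator $\deg\varphi_0+1$ times gives $(c-1)^{\deg\varphi_0+1}c^q=0$, which forces $c=1$. Alternatively, compare growth rates: polynomials grow subexponentially unless $|c|\le1$; then $c^q\to 0$ or oscillates unless $c=1$, and $c=-1$ is excluded because $(-1)^q$ is not eventually polynomial. Hence $p_F$ is monic.

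The only delicate point is the degenerate case $d=0$, where $p_F$ is a nonzero constant $c$. The same argument still applies, since the codegree-$0$ coefficient of $c^q$ is again $c^q$. So I expect no real obstacle beyond verifying strong polynomiality of $F^{\sqcup q}$ and making sure the empty graph case $q=0$ is harmless.
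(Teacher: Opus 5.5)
Your proof is correct and follows the paper's argument. You take $F^{\sqcup q}$ as the source sequence, use $\hom(F^{\sqcup q},H_n)=\hom(F,H_n)^q$, and note that the leading coefficient $c^q$ can agree with a polynomial in $q$ only if $c\in\{0,1\}$. The only differences are that you verify strong polynomiality of $F^{\sqcup q}$ directly instead of citing Lemma 2.12 of \cite{GNO}, and you justify the last step with finite differences where the paper simply asserts it.
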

\begin{proof}
Let $F_q$ be the graph sequence defined by \[F_q=\bigsqcup_{i=1}^{q}F\]
\vspace{-7mm}

By Lemma 2.12 of \cite{GNO}, $F_q$ is a strongly polynomial sequence. Then $\hom(F_q,H_n)=\hom(F,H_n)^q$ is a two-level polynomial by hypothesis. If the leading coefficient of $\hom(F,H_n)$ is $c$, then the leading coefficient of $\hom(F,H_n)^q$ is $c^q$, which is a polynomial only if $c$ is $0$ or $1$. The result follows.
\end{proof}
We end with some questions about the two-level behavior of homomorphisms between strongly polynomial families.

\begin{quest} Can we classify all possible two-level target sequences?
\end{quest}

\begin{quest} More generally, is it possible to find necessary and sufficient conditions on a pair of graph sequences $G_q$, $H_n$ for $\hom(G_q,H_n)$ to be a two-level polynomial of infinite depth?
\end{quest}

The two examples in Proposition \ref{prop:counterexamples}, though they show that the depth is not always infinite, still exhibit nontrivial two-level behavior in the following sense. In each family, the degree and depth functions are both polynomials of the same degree.  

\begin{quest}
Do there exist strongly polynomial families $G_q$, $H_n$ such that the depth function of the sequence $\hom(G_q, H_n)$ is a polynomial of strictly lower degree than the degree function? 
\end{quest}

\begin{quest}
More dramatically, do there exist strongly polynomial families $G_q$, $H_n$ such that the depth function of $\hom(G_q, H_n)$ is \emph{constant} while the degree function is a polynomial of positive degree? 
\end{quest}

\section*{Acknowledgements} Both authors were supported by the Faculty of Sciences of the Universidad de los Andes: Tristram Bogart by internal research grant INV-2025-213-3438, and Santiago Jim\'enez Salazar by a summer graduate research assistantship. The authors thank John Goodrick and Kevin Woods for useful discussions about strongly polynomial families and two-level polynomials. 

\bibliographystyle{amsplain}
\bibliography{references}
\end{document}